\def\CC {{\mathbb C}}     
\def\NN {{\mathbb N}}     
\def\PP {{\mathbb P}}     
\def\QQ {{\mathbb Q}}     
\def\RR {{\mathbb R}}     
\def\TT {{\mathbb T}}     
\def\ZZ {{\mathbb Z}}     
\def\Lw  {\Longrightarrow}
\def\mc {\mathcal}
\def\mk {\mathfrak}
\def\ol  {\overline}
\def\tst {\Longleftrightarrow}
\def\ul  {\underline}
\newtheorem{theorem}{Theorem}[section]
\newtheorem{lemma}[theorem]{Lemma}
\newtheorem{prop}[theorem]{Proposition}
\newtheorem{coro}[theorem]{Corollary}
\newtheorem{rem}{Remark}[section]
\newtheorem{defin}{Definition}[section]
\begin{document}

\title[]{Geometric invariant theory\\ for principal three-dimensional subgroups\\ acting on flag varieties}

\author[]{Henrik Sepp\"anen and Valdemar V. Tsanov}

\thanks{Both authors are supported by the DFG Priority Programme 1388 ``Representation Theory''. V.V.T. was also partially supported by project SFB/TR12 at Ruhr-Universit\"at Bochum}

\maketitle

\begin{abstract}
Let $G$ be a semisimple complex Lie group. In this article, we study Geometric Invariant Theory on a flag variety 
$G/B$ with respect to the action of a principal 3-dimensional simple subgroup $S\subset G$. We determine explicitly the 
GIT-equivalence classes of $S$-ample line bundles on $G/B$. We show that, under mild assumptions, among the GIT-classes there are chambers, 
in the sense of Dolgachev-Hu. The GIT-quotients with respect to various chambers form a family of Mori dream spaces, canonically 
associated with $G$. We are able to determine the three important cones in the Picard group of any of these quotients: the pseudo-effective-, the movable-, and the nef cones.
\end{abstract}

\tableofcontents


\section*{Introduction}

Let $G$ be a semisimple complex Lie group. We explore the interaction of two remarkable 
objects from the theory of semisimple groups - flag varieties $X=G/B$ and principal 3-dimensional simple subgroups $S$. One context in which these two objects 
interact is Geometric Invariant Theory (GIT), quotients, and their variations (VGIT). The theme of this paper is to relate properties of 
$S$-invariant sections of line bundles on $X$ to geometric properties of suitable GIT-quotients of $X$ under $S$. 
It is known that, for a given line bundle $L$, the study of its invariant sections amounts to a study of a GIT-quotient of $X$ which 
is adapted to the line bundle $L$ (cf. e.g. \cite{Sjamaar-HolSlices}). What we have in mind here is rather to have a quotient that is ``universal'' in the sense that 
it reflects the invariant theory of sections of {\it all} line bundles. In this sense, this paper gives an example of 
pairs  $(S, G)$ of a semisimple group $G$ and a semisimple subgroup $S$ where the theory of suitable quotients developed in 
\cite{Seppanen-GlobBranch} works particularly well. The quotient varieties obtained are Mori dream spaces (cf. \cite{hk}), canonically associated to $G$. What we present in this article are some 
initial results, and it becomes evident from these results that this example could be developed further to illustrate 
further elements of the theoretical framework of GIT in 
the sense of \cite{Kirwan}, \cite{Thaddeus-GITflips}, \cite{Dolga-Hu}, \cite{KKV-89}, \cite{Seppanen-GlobBranch}. Our main source for the properties 
of principal subgroups is the classical work of Kostant, \cite{Kostant-PTDS}.

We summarize some of our results in the following two theorems. The first one given as Theorem \ref{Theo KirwanStratXusSlambda} in the text, and the second one is a compilation of Proposition \ref{Prop S-Orbits X}, Theorem \ref{Theo Chambers}, Theorem \ref{Theo YisMoriDream and EffY=C}, Theorem \ref{Theo EffY=MovY NeffY=C}. To give the statements, we briefly introduce some standard notation. We recall some definitions and basic properties in the next section. Assume that $G$ is connected and simply connected. Let $H\subset B\subset G$ be nested Cartan and Borel subgroups. Let $X=G/B$ be the flag variety. Let $W=N_G(H)/H$ be the Weyl group, with the length $l(w)$ defined by $B$, and let $w_0$ be the longest element. Recall that the $H$-fixed points in $X$ are given by Weyl group elements $X^H=\{x_w=wB:w\in W\}$. Let $\Lambda$ be the weight lattice of $H$, let $\Lambda^{+}$ and $\Lambda^{++}$ denote the monoids of dominant and strictly dominant weight with respect to $B$. We have ${\rm Pic}(X)\cong \Lambda$, 
with ${\mc L}_\lambda=G\times_B\CC_{-\lambda}$; all line bundles are $G$-equivariant. The sets $\Lambda^+$ and $\Lambda^{++}$ represent respectively the sets of effective and ample line bundles. Furthermore, all ample bundles are very ample. By the Cartan-Weyl classification of irreducible modules and the Borel-Weil theorem, the space of global section of any effective line bundle is an irreducible $G$-module and all irreducible $G$-modules are obtained this way; $H^0(X,{\mc L}_\lambda)=V_\lambda^*$, for $\lambda\in\Lambda^+$. Let $S\subset G$ be a principal 3-dimensional simple subgroup with Cartan and Borel subgroups $H_S\subset B_S\subset S$. The principal property of $S$ ensures that there are unique $H\subset B\subset G$ satisfying $H_S=S\cap H$ and $B_S=S\cap B$. The line bundle ${\mc L}_\lambda$ on $X$, being $G$-equivariant, is also $S$-equivariant; hence there are well-defined notions of stability, instability and semistability on $X$ with respect to $S$ and ${\mc L}_\lambda$. The set of ample line 
bundles on $X$ is partitioned into GIT-classes, two line bundles being equivalent if their semistable (or equivalently unstable) loci coincide. In the following theorem, we determine the unstable loci of ample line bundles, in terms of the Kirwan-Ness stratification with respect to the squared norm $||\mu||^2$ of a momentum map $\mu=\mu_{K_S}:X\to i\mk k_S^*$. Here $K_S\subset S$ is a maximal compact subgroup and $\mu$ is defined with respect to the $K$-invariant K\"ahler structure on $X$ defined by $\lambda$, where $K\subset G$ is a maximal compact subgroup containing $K_S$. Let $h_0\subset\mk h_S$ be a dominant integral element. Then $X^{h_0}=X^H$ and these are exactly the critical points of the momentum component $\mu^{h_0}$.\\

\noindent{\bf Theorem A:} Let $\lambda\in\Lambda^{++}$. The Kirwan strata of the $S$-unstable locus in $X$ with respect to ${\mc L}_\lambda$ are the $S$-saturations of Schubert cells $SBx_w$, for $w\in W$ such that $w\lambda(h_0)>0$, with the Schubert cell $Bx_w$ being the prestratum and $K_Sx_w$ being the critical set of $||\mu||^2$. Thus
\begin{gather}\label{For KirwanStratXSlambda}
X_{us}(\lambda) = \bigcup\limits_{w\in W:w\lambda(h_0)>0} {\mc S}_{w\lambda} \;,\quad {\mc S}_{w\lambda}=SBx_w \;.
\end{gather}
The dimension of the strata is given by $\dim S_{w\lambda}=l(w)+1$, and consequently
$$
\dim X_{us}(\lambda)=1+\max\{l(w):w\lambda(h_0)>0\} \quad,\quad {\rm codim}_X X_{us}(\lambda) = -1+\min\{l(w):w\lambda(h_0)<0\} \;.
$$

This explicit description of the unstable loci allows us to determine the GIT-classes of $S$-ample line bundles on $X$, and some properties of the GIT quotients, as follows.\\

\noindent{\bf Theorem B:} Assume that every simple factor of $G$ has at least 5 positive roots. The following hold:
\begin{enumerate}
\item[(i)] The $S$-orbits in $X$ of dimension less than 3 are exactly the orbits through $H$-fixed points, $X^H=\{x_w=wB:w\in W\}$. There is a unique 1-dimensional orbit $Sx_1=Sx_{w_0}\cong S/B_S\cong \PP^1$. There are $\frac12|W|-1$ two-dimensional orbits $Sx_w=Sx_{w_0w}\cong S/H_S\cong(\PP^1\times\PP^1)\setminus{\rm diag}(\PP^1)$. The rest of the orbits are three dimensional with trivial or finite abelian isotropy groups.\\

\item[(ii)] All ample line bundles on $X$ are $S$-ample, i.e., some power admits $S$-invariant sections. The $S$-unstable locus of any ample line bundle on $X$ has codimension at least 2. The GIT-equivalence classes of $S$-ample line bundles on $X$ are defined by the subdivision of the dominant Weyl chamber $\Lambda_\RR^+$ by the system of hyperplanes ${\mc H}_{w}, w\in W$ given by
$$
{\mc H}_w = \{\lambda\in\Lambda_\RR : \lambda(wh)=0\} \;,
$$
where $h$ is an arbitrary fixed nonzero element in the Lie algebra of $H_S$.\\

\item[(iii)] The GIT-equivalence classes given by the connected components of $\Lambda_\RR^{++}\setminus(\cup_w {\mc H}_w)$ are chambers, in the sense of Dolgachev-Hu, which in our case means that the semistable locus consists only of 3-dimensional orbits. The hyperplanes ${\mc H}_w$ are walls, in the sense of Dolgachev-Hu.\\

\item[(iv)] The GIT-quotient $Y(\mc C)=X_{ss}(\mc C)//S$ with respect to a chamber ${\mc C}\subset\Lambda_\RR^{++}\setminus(\cup_w {\mc H}_w)$ is 
a geometric quotient. The variety $Y$ is a Mori dream space, whose Picard group is a lattice of the same rank as $\Lambda$. 
There is an isomorphism of $\QQ$-Picard groups ${\rm Pic}(X)_\QQ\cong {\rm Pic}(Y)_\QQ$ induced by descent in one direction, and pullback followed by extension in the other. There are the following isomorphisms involving the pseudo-effective, movable, and nef cones in the Picard group of $Y$:
$$
\ol{\rm Eff}(Y) = {\rm Mov}(Y) \cong \Lambda_\RR^+ \quad,\quad {\rm Nef}(Y) \cong \ol{\mc C} \;.
$$
Moreover, every nef line bundle on $Y$ is semiample, i.e., admits a base-point-free power.\\

\item[(v)] Fix $Y$ as in (iv). For every $\lambda\in\Lambda^{+}$, there exists $k\in\NN$ such that $(V_{k\lambda})^S\ne 0$ and a line bundle $\ul{\mc L}$ on $Y$ such that
$$
H^0(Y,\ul{\mc L}^j) \cong (V_{jk\lambda})^S  \quad\textrm{for all}\quad j\in\NN \;.
$$
\end{enumerate}

\vspace{0.3cm}

\noindent{\bf Remark:} The assumption made in the above theorem allows us to reduce the technicality of the statement. The excluded cases are those $G$ 
admitting simple factors of type ${\rm A}_1$, ${\rm A}_2$, ${\rm B}_2$. They are taken into account in the main text and in some detail in Section \ref{Sect Except}. Recall that $\dim X$ is equal to the number of positive roots of $G$. The particularities of the excluded cases are due to the low dimension of the respective factors of $X$, which results in low codimension of the unstable locus for some line bundles. Let us note here the following, in order to give an idea of the occurring phenomena. The presence of simple factors of $G$ of rank 1 implies the existence of ample line bundles on $X$, which are not $S$-ample, and with respect to which the whole $X$ is unstable. The presence of simple factors of type ${\rm A}_2$ or ${\rm B}_2$ implies the existence of $S$-ample line bundles whose unstable loci contain divisors; this interferes in the relations 
between the Picard groups of $X$ and its GIT-quotients.\\

\noindent{\bf An application:} Geometric Invariant Theory finds one of its applications in the theory of branching laws for reductive groups, a.k.a. eigenvalue problem. This relates to part (v) of the above theorem. Let us outline the general ideas or order to see how our example fits in, what phenomena it exhibits, and what questions it presents. If $\hat G\subset G$ is an embedding of reductive complex algebraic groups, the branching law consist of the descriptions of the decompositions of irreducible $G$-modules over $\hat G$. This amounts to descriptions of the so called eigenmonoid (or Littlewood-Richardson monoid) and multiplicities
$$
{\mc E}(\hat G\subset G) = \{(\hat\lambda,\lambda)\in\hat\Lambda^+\times\Lambda^+ \;:\; {\rm Hom}_{\hat G}(\hat V_{\hat\lambda},V_\lambda)\ne 0\} \quad,\quad m_\lambda^{\hat\lambda}=\dim{\rm Hom}_{\hat G}(\hat V_{\hat\lambda},V_\lambda) \;.
$$
The relation to geometric invariant theory comes via the isomorphisms
$$
{\rm Hom}_{\hat G}(\hat{V}_{\hat\lambda},V_\lambda) \cong (\hat{V}_{\hat\lambda}^* \otimes V_\lambda)^{\hat G} \cong H^0(\hat G/\hat B\times G/B , \hat{\mc L}_{-\hat{w}_0\hat\lambda}\boxtimes{\mc L}_\lambda )^{\hat G} \;.
$$
Thus, the branching laws for reductive groups are contained in the more general laws describing invariants, where one considers the nulleigenmonoid and respective multiplicities
$$
{\mc E}_0(\hat G\subset G) = \{\lambda\in\Lambda^+ \;:\; V_\lambda^{\hat G}\ne 0\} \quad,\quad m_\lambda=\dim V_\lambda^{\hat G} \;.
$$
It is known by a theorem of Brion and Knop, that ${\mc E}_0$ is a finitely generated submonoid of $\Lambda^+$, spanning a rational polyhedral cone 
${\rm Cone}({\mc E}_0)\subset\Lambda_\RR^+$, the nulleigencone, cf. \cite{Elashvili-Eigenmonoid}. The equalities of the nulleigencone have been determined, 
the final result providing a minimal list of inequalities was obtained by Ressayre, building on works of Heckmann, Berenstein-Sjamaar, Belkale-Kumar, cf. \cite{Ressayre-2010-GITandEigen}. In the particular case of a principal subgroup $S\subset G$, the nulleigencone was computed as an example by Berenstein and Sjamaar, \cite{Beren-Sjam}. The global description of the multiplicities still presents an open problem in the general situation. There are results concerning specific weights, e.g. Kostant's multiplicity formula, cf. \cite{Vogan-78-MultiKosta}. There are also methods for specific types of subgroups, e.g. Littlemann's path method, \cite{Littelmann-1995-Paths}, the method of Berenstein-Zelevinski, \cite{Beren-Zele-2001}. Recently, the first author has constructed a global Okounkov body, $\Delta_Y$ of a suitable quotient of $X=G/B$ (we do not use this notation elsewhere in the text), 
a strongly convex cone with a surjective map $p:\Delta_Y\to{\rm Cone}(\mc E)$, such that the fibres $\Delta_Y(\hat\lambda,\lambda)=p^{-1}(\hat\lambda,\lambda)$ are in 
turn Okounkov bodies, whose volume varies along the ray $\RR_+(\hat\lambda,\lambda)$ asymptotically as the dimension of $(\hat{V}_{\hat\lambda}^* \otimes V_\lambda)^{\hat G}$, 
cf. \cite{Seppanen-GlobBranch}. In fact, this is proven more generally for $\hat{G}$-invariants, for a pair $(\hat{G}, G)$ of semisimple groups where
$\hat{G}$ is a subgroup of $G$ under the assumption on the existence of chambers among the GIT-classes, an assumption which is rather mild in the branching-case, 
which corresponds to the pair $(\hat{G}, \hat{G} \times G)$.  The existence of chambers is not guaranteed for a general action of a reductive subgroup 
$\hat G\subset G$ on $G/B$; for instance, so-called thick walls (cf. \cite{Ress-AppendDolgaHu}) could appear. In the case of a principal subgroup $S\subset G$, however, our results show that there are no thick walls, and hence, by the results of \cite{Seppanen-GlobBranch}, the dimensions of the spaces $V_\lambda^S$ of $S$-invariants could be measured, in an asymptotic sense, by volumes of slices of a convex cone, namely of a a global 
Okounkov body of a fixed quotient $Y=Y(\mc C)=X_{ss}(\mc C)//S$.

\section{Setting}

\subsection{The flag variety $G/B$ and GIT for subgroups}\label{Sect Flags}

Let $G$ be a connected, simply connected semisimple complex Lie group. Let $B\subset G$ be a Borel subgroup and $X=G/B$ be the flag variety of $G$. Let $H\subset B$ be Cartan subgroup and $\Delta=\Delta^+\sqcup\Delta^-$ be the root systems of $G$ with respect to $H$, split into positive and negative part with respect to $B$. Let $\Pi$ be the set of simple roots. Let ${\mk g}={\mk n}\oplus{\mk h}\oplus\bar{\mk n}$ be the associated triangular decomposition of the Lie algebra of $G$, where ${\mk n}=[{\mk b},{\mk b}]$ is the nilradical of ${\mk b}$ and $\bar{\mk n}$ is the nilradical of the opposite Borel $\bar{\mk b}$. The Weyl group $W=N_G(H)/H$ acts simply transitively on the Borel subgroups of $G$ containing $H$, and thus on the set of $H$-fixed points $X^H=\{x_w=wB, w\in W\}$. The $B$-orbits in $X$ define the Schubert cell decomposition
$$
X=\bigcup\limits_{w\in W} Bx_w \;.
$$
The torus acts on the tangent space $T_{x_w}X$ and the weights for this action are $w\Delta^-$. The unipotent radical $N\subset B$ acts transitively on each Schubert cell and, if $N_{x_w}$ denotes the stabilizer of $x_w$, the set of positive roots is partitioned roots as $\Delta^+=\Delta(N_{x_w})\sqcup\Delta(N/N_{x_w})$, with $\Delta(N/N_{x_w})$ being the set of weights for the $T$-action on $T_{x_w}Bx_w$. We have $\Delta(N/N_{x_w})=\Delta^+\cap w\Delta^-$, this set is called the inversion set of $w^{-1}$, if we adhere to the popular notation $\Phi_w=\Delta^+\cap w^{-1}\Delta^-$. The two sets $\Phi_w$ and $\Phi_{w^{-1}}$ have the same number of elements, the length $l(w)$ of $w$, also equal to the number of simple reflections in a reduced expression for $w$. Thus $\dim Bx_w=l(w)$. Inversion sets are closed and co-closed under addition in $\Delta^+$, we have $\Delta^+\setminus\Phi_w=\Phi_{w_0w}$, where $w_0$ is the longest element of $w$. Thus $\Delta(N_{x_w})=\Phi_{w_0w^{-1}}$ and $\Delta(N/N_{x_w})=\Delta(N_
{x_{w_0w}})=\Phi_{w^{-1}}$.

Let $\Lambda\in{\mk h}^*$ denote the weight lattice of $H$, $\Lambda^+$ the set of dominant weights with respect to $B$ and $\Lambda^{++}$ the set of strictly dominant weights, i.e., those belonging to the interior of the Weyl chamber. For $\lambda\in\Lambda^+$, let $V_\lambda$ denote an irreducible $G$-module with highest weight $\lambda$ and let $v^\lambda$ be the highest weight vector of $G$, the unique $B$-eigenvector. We have an equivariant orbit-map $X\to G[v^\lambda]\subset\PP(V_\lambda)$. and this is the unique closed orbit of $G$ in $\PP(V_\lambda)$. We have $\varphi(x_w)=[v^{w\lambda}]$. The map $\varphi$ is an embedding if and only if $\lambda\in\Lambda^{++}$. For $\lambda\in\Lambda^{+}\setminus\Lambda^++$, the orbit $G[v^\lambda]$ is a partial flag variety $G/P$, where $P\supset B$ is a parabolic subgroup of $G$. We shall focus mostly on the complete flag variety $G/B$ and the interior of the Weyl chamber.

Every line bundle on $X$ admits a unique $G$-linearization. The Picard group of $X$ is identified with the weight lattice. For $\lambda\in\Lambda$, we denote by ${\mc L}_\lambda = G\times_B\CC_{-\lambda}$ the associated homogeneous line bundle on $X$. For dominant $\lambda$, ${\mc L}_\lambda$ is the pullback of ${\mc O}_{\PP(V_\lambda)}(1)$. The Borel-Weil theorem asserts that
$$
H^0(X.{\mc L}_\lambda) \cong V_\lambda \quad{\rm for}\quad \lambda\in\Lambda^+ \quad{\rm and}\quad H^0(X,{\mc L}_\lambda)=0 \quad{\rm for}\quad \lambda\notin\Lambda\setminus\Lambda^+ \;.
$$

In invariant theory one considers a subgroup $\hat G\subset G$ and the space of $\hat G$-invariant vectors $V_\lambda^{\hat G}$. We restrict ourselves to the case when $\hat G$ is semisimple, and our results concern the very special case of a principal simple subgroup of rank 1, but now we outline the general scheme. The first natural questions one may ask are: What is the dimension of the space of invariants and when is it nonempty? How does it vary with $\lambda$? Two central objects associated with these questions are the nulleigenmonoid, or null-Littlewood-Richardson monoid (which is indeed finitely generated submonoid of $\Lambda^+$ by a theorem of Brion and Knop), and the multiplicity
$$
{\mc E}_0(\hat G \subset G) = \{ \lambda\in\Lambda : V_\lambda^{\hat G}\ne 0 \} \quad,\quad m_\lambda=\dim V_\lambda^{\hat G} \;.
$$
Note that $V_\lambda^{\hat G}$ has a canonical nondegenerate pairing with $(V_{\lambda}^*)^{\hat G}$ and recall that $V_\lambda^*\cong V_{-w_0\lambda}$, whence ${\mc E}_0^*=-w_0{\mc E}_0={\mc E}_0$ and $m_\lambda=m_{-w_0\lambda}$.

The Borel-Weil theorem allows to rephrase these questions in terms of invariant sections of line bundles, and ${\mc E}_0^{++}={\mc E}_0\cap\Lambda^{++}$ corresponds to the cone $C^{\hat G}(X)$ of $\hat G$-ample line bundles in the Picard group of $X$. Of particular interest is the variation of the multiplicity as the weight varies along a ray, $m_{k\lambda}$, $k\in\NN$. Since ${\mc L}_\lambda^k={\mc L}_{k\lambda}$, this relates to the ring of $\hat G$-invariants in the homogeneous coordinate ring of $X\subset \PP(V_\lambda)$, which is given by
$$
R(\lambda) = \bigoplus\limits_{k\in\NN} H^0(X,{\mc L}_\lambda) = \CC[V_\lambda]/I(X) \quad,\quad R_k(\lambda) = V_{k\lambda}^* \;.
$$
In this setting, there are the notions of instability, semistability and stability on $X$, with respect to ${\mc L}_\lambda$, defined by
\begin{gather*}
\begin{array}{l}
X_{us}(\lambda) = X_{us,\hat G}({\mc L}_\lambda) = \{ x\in X \;:\; f(x)=0 , \forall f\in R(\lambda)^{\hat G}\setminus\CC \} \\
X_{ss}(\lambda) = X_{ss,\hat G}({\mc L}_\lambda) = \{ x\in X \;:\; \exists f\in R(\lambda)^{\hat G}\setminus\CC , f(x)\ne0 \} \\
X_s(\lambda) = X_{s,\hat G}({\mc L}_\lambda) = \{ x\in X \;:\; \hat G_x \;\textrm{is finite and}\; \hat Gx\subset X_{ss}(\lambda) \;\textrm{is closed} \;\}\;. \\
\end{array}
\end{gather*}
With these definitions, we have, for $\lambda\in\Lambda^{+}$.
$$
\lambda \in {\rm Cone}({\mc E}_0) \;\tst\; X_{ss}(\lambda)\ne\emptyset \;.
$$
This relation has been the basis for the descriptions of eigencone initiated with Heckman's thesis and culminating with Ressayre's minimal list of inequalities defining ${\rm Cone}({\mc E}_0)$. We are going to consider subgroups for which ${\rm Cone}({\mc E}_0)$ has a fairly simple structure. In fact, in many cases we will have ${\rm Cone}({\mc E}_0)=\Lambda_\RR^+$. We are rather interested in the structure of the unstable, semistable and stable loci. Let us note, however, that the cases when ${\rm Cone}({\mc E}_0)\ne\Lambda_\RR^+$ are indeed of specific interest for the study of branching laws, and we shall see a manifestation of this later on.

\begin{rem}
The group $\hat G$ necessarily has closed orbits in $X$. These closed orbits are flag varieties of $\hat G$. In fact, they are all complete flag varieties, i.e., have the form $\hat G/\hat B$, because the all isotropy groups in $X$ are solvable. Thus the closed $\hat G$-orbits are parametrized by the Borel subgroups of $G$ containing a fixed Borel subgroup $\hat B\subset \hat G$. Note that
$$
\hat Gx\subset X \quad \textrm{closed} \;\Lw\; \hat Gx\subset X_{us}(\lambda) \quad\textrm{for all}\quad \lambda\in\Lambda^{++} \;.
$$
\end{rem}

We end this section by recalling the notions of GIT-equivalence classes and chambers, following Dolgachev and Hu, \cite{Dolga-Hu}. We have $\Lambda_\RR\cong{\rm Pic}(X)_\RR$. The dominant Weyl chamber $\Lambda_\RR^+$ is identified with the pseudo-effective cone $\ol{\rm Eff}(X)$. Then ${\rm Cone}({\mc E}_0)\cong C^{\hat G}(X)$ is the $\hat G$-ample cone on $X$.

\begin{defin}
Two $\hat G$-ample line bundles ${\mc L}_{\lambda_1}$ and ${\mc L}_{\lambda_1}$ on $X$ are called GIT-equivalent, if they have the same semistable loci, i.e., $X_{ss}(\lambda_1)=X_{ss}(\lambda_2)$. The equivalence classes are called GIT-classes. If ${\mc C}$ is a GIT-class of line bundles, we denote by $X_{ss}(\mc C)$ the corresponding semistable locus.
\end{defin}

Recall that the equivalence relation on line bundles in $C^{\hat G}(X)$ is extended to an equivalence relation on $C^{\hat G}(X)$, by a natural extension of the notion of stability and semistability to $\RR$-divisors, cf. \cite{Dolga-Hu}. The following definition we singles out a specific type of equivalence classes, chambers, the existence of which has remarkable consequences, as shown by Dolgachev and Hu. The definition we adopt here differs from the original one in \cite{Dolga-Hu}, but is shown therein to be an equivalent characterization.

\begin{defin}\label{Def Chambers}
A GIT-class ${\mc C}\subset C^{\hat G}(X)$ is called a chamber, if $X_{ss}({\mc C})=X_s(\mc C)$.
\end{defin}

We also distinguish another type of GIT-class, these without unstable divisors.

\begin{defin}\label{Def S-Movable}
A GIT-class ${\mc C}\subset C^{\hat{G}}(X)$ is called $\hat{G}$-movable, if ${\rm codim}_X(X_{us}(\mc C))\geq 2$.
\end{defin}

If $\mc L_\lambda$ is a $\hat{G}$-ample line bundle on $X$, the GIT-quotient, or Mumford quotient, of $X$ with respect to $\mc L_\lambda$ is given by
$$
Y_\lambda = {\rm Proj}(R(\lambda)^{\hat{G}}) = X_{ss}(\lambda)/\sim \quad,\quad{\rm where}\; x\sim y \;\tst\; \ol{\hat{G}x}\cap\ol{\hat{G}y}\ne\emptyset \;.
$$
The quotient depends only on the GIT-class of $\lambda$, say $\mc C$, so we write $Y_{\mc C}=Y_\lambda$. Chambers and $\hat{G}$-movable GIT-classes are important, because they give rise to, respectively, geometric quotients and quotients whose $\RR$-Picard group is isomorphic to the one of $X$.

In what follows, unless otherwise specified, we apply the notation ${\mc E}_0$, $X_{us}$ etc. for the case $\hat G=S$, where $S$ is the principal three dimensional simple subgroup of $G$ defined in the next section.

\subsection{The principal subgroup $S\subset G$}

Among the conjugacy classes of three dimensional simple subgroups of a semisimple complex Lie group $G$, there is a distinguished one - the class of principal subgroups, cf. \cite{Kostant-PTDS}. They admit several characterizations. Since we focus on the flag variety $X=G/B$, we define a {\it principal subgroup} $S\subset G$ to be a three dimensional simple subgroup with a unique closed orbit in $X$. Such an orbit is a rational curve, which we call the {\it principal curve} $C\subset X$. In the next proposition we recall other characterizations of principal subgroups.

\begin{prop}\label{Prop CharcterizeThePrince}
Let ${\mk s}\subset{\mk g}$ be a three dimensional simple subalgebra, i.e., ${\mk s}\cong\mk{sl}_2\CC$, and let $S\subset G$ be the corresponding subgroup. Let $\{e_+,h_0,e_-\}\subset{\mk s}$ be a standard $\mk{sl}_2$-triple. Then the following are equivalent:
\begin{enumerate}
\item[\rm (i)] $S$ has a unique closed orbit in $X$.

\item[\rm (ii)] Every Borel subgroup of $S$ is contained in a unique Borel subgroup of $G$.

\item[\rm (iii)] These exists a unique triangular decomposition ${\mk g}={\mk n}\oplus{\mk h}\oplus\bar{\mk n}$ compatible with ${\mk s}=\CC e_+\oplus\CC h_0\oplus \CC e_-$.

\item[\rm (iv)] $e_+$ is contained in a unique maximal subalgebra ${\mk n}\subset{\mk g}$ of nilpotent elements. (The elements with this property are called principal nilpotent elements. They form a single conjugacy class.)

\item[\rm (v)] If ${\mk n}\subset{\mk g}$ is any maximal subalgebra of nilpotent elements containing $e_+$, ${\mk h}\subset {\mk g}$ is a Cartan subalgebra normalizing ${\mk n}$ and $e_\alpha\in{\mk n}$, $\alpha\in\Delta^+$ are the root vectors, upon writing
$$
e_+ = \sum\limits_{\alpha\in\Delta^+} c_\alpha e_\alpha \;,
$$
we have $c_\alpha\ne 0$ for all simple roots $\alpha$.
\end{enumerate}
Furthermore, all subalgebras (respectively subgroups) of ${\mk g}$ (respectively $G$) with the above properties form a single conjugacy class.
\end{prop}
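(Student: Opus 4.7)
The plan is to establish the chain of equivalences (i)$\Leftrightarrow$(ii)$\Leftrightarrow$(iii)$\Leftrightarrow$(iv)$\Leftrightarrow$(v) and the conjugacy claim, combining direct geometric/algebraic translations for the first equivalences with Kostant's classical theory of regular nilpotent elements from \cite{Kostant-PTDS} for the rest.

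For (i)$\Leftrightarrow$(ii): any closed $S$-orbit in $X$ has the form $S/B_S'$ for some Borel $B_S'\subset S$, since $S$ is semisimple of dimension $3$ and has no fixed point on $X$ (point-stabilizers in $X$ are solvable, so a semisimple subgroup cannot fix a point). Fixing one Borel $B_S\subset S$, each closed orbit meets $X^{B_S}$ in exactly one point, and $gB\in X^{B_S}$ iff $B_S\subset gBg^{-1}$, so closed $S$-orbits correspond bijectively to Borel subgroups of $G$ containing $B_S$. For (ii)$\Leftrightarrow$(iii): writing $\mk b_S=\CC h_0\oplus\CC e_+$, a compatible triangular decomposition is the same data as a pair $(\mk b,\mk h)$ with $\mk b\supset\mk b_S$ and $\mk h\subset\mk b$ a Cartan through $h_0$. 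Given uniqueness of $\mk b$, uniqueness of $\mk h$ follows from regularity of $h_0$, and $e_-\in\bar{\mk n}$ is automatic from the $\mathrm{ad}(h_0)$-eigenspace analysis (since $h_0$ acts non-negatively on $\mk n$).

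For the remaining equivalences (iii)$\Leftrightarrow$(iv)$\Leftrightarrow$(v) and the conjugacy statement: the equivalence (iv)$\Leftrightarrow$(v), together with the single $G$-conjugacy class of regular nilpotent elements, I would cite directly from \cite{Kostant-PTDS}. To connect these with (iii), I would prove (iii)$\Leftrightarrow$(iv) via Jacobson--Morozov: if $\mk n'$ is any maximal nilpotent subalgebra containing $e_+$, one produces an $\mk{sl}_2$-triple $(e_+,h',e_-')$ with $h'$ in a Cartan of $\mk b':=N_{\mk g}(\mk n')$, matches it with $(e_+,h_0,e_-)$ via $Z_G(e_+)$-conjugacy of triples completing $e_+$, and uses $Z_G(e_+)\subset B$ (valid for regular $e_+$) to force $\mk b'=\mk b$; the converse direction is immediate from the bijection between Borels and their nilradicals. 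The conjugacy of all principal subgroups then follows by combining the single $G$-conjugacy class of regular nilpotents with the $Z_G(e_+)$-conjugacy of completing $\mk{sl}_2$-triples.

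The main obstacle will be the logical organization around Kostant's results: the bridging argument (iii)$\Leftrightarrow$(iv) calls on regularity of both $e_+$ and $h_0$, and one must be careful to invoke the pieces of Kostant's theory in the right order so as not to produce a circular proof.
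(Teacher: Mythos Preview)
The paper does not actually prove this proposition: it is stated without proof, introduced by the sentence ``In the next proposition we recall other characterizations of principal subgroups,'' with Kostant's paper \cite{Kostant-PTDS} given as the source for properties of principal subgroups. So there is no proof in the paper to compare your proposal against.

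Your proposal is a reasonable sketch of how one would assemble a proof from standard ingredients, and you correctly identify that the substantive content (regularity of $e_+$, the characterization (v), and the single conjugacy class) comes from \cite{Kostant-PTDS}, while (i)$\Leftrightarrow$(ii)$\Leftrightarrow$(iii) are essentially translations. One point to watch in your (ii)$\Rightarrow$(iii) step: you invoke regularity of $h_0$ to pin down the Cartan $\mk h$ inside the unique $\mk b$, but regularity of $h_0$ is not given a priori---it is itself a consequence of the principal property. A cleaner route is to first pass from (ii) to (iv) directly (the unique Borel containing $B_S$ has nilradical $\mk n$, and any maximal nilpotent $\mk n'$ containing $e_+$ yields a Borel containing $B_S$, hence $\mk n'=\mk n$), then use Kostant's result that a regular nilpotent $e_+$ forces $h_0$ to be regular semisimple, and only then deduce (iii). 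You already flag this ordering issue in your final paragraph, so you are aware of the hazard; just make sure the written proof actually threads it.
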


From now on we fix a principal subgroup $S\subset G$ and a triple $\{e_+,h_0,e_-\}\subset{\mk s}$ and the associated triangular decomposition of ${\mk g}$. We may further take the nilpotent element to be the sum of the simple root vectors:
$$
e_+ = \sum\limits_{\alpha\in\Pi} e_\alpha \;.
$$
Then we necessarily have
\begin{gather}\label{For Def h0}
\alpha(h_0) = 2 \quad\textrm{for all}\quad \alpha\in\Pi \;,
\end{gather}
which determines $h_0$ uniquely in ${\mk h}$. We denote by ${\mk h}_S=\CC h_0$ and ${\mk b}_S=\CC h_0\oplus\CC e_+$ the Cartan and Borel subalgebras of ${\mk s}$ associated with the given triple, respectively, and by $H_S$ and $B_S$ the corresponding subgroups of $S$. For the attributes of $G$ we use the notation introduced earlier in the text, with reference to the given triangular decomposition.

\begin{rem}
For any finite dimensional $G$-module $V$, we have 
\begin{gather}\label{For VG=VTcapVS}
V^G=V^H\cap V^S \;.
\end{gather}
\end{rem}

In what follows, we shall make extensive use of restrictions of weights from $H$ to $H_S$. We denote the inclusion map by $\iota:S\subset G$, and keep the same notation for the restriction of $\iota$ to subgroups; we denote by $\iota^*$ the resulting restrictions and pullbacks. In particular, for weights we have
$$
\iota^*:\Lambda \to \Lambda_S \cong \ZZ \quad,\quad \nu\mapsto\nu(h_0) \;.
$$
This map is determined by its values on the simple roots (\ref{For Def h0}). However, weights, especially dominant weights, are often given in terms of the fundamental weights $\omega_\alpha$. The values of fundamental weights on the principal element $h_0$ can be computed using the classification and structure of root systems. We record in the next proposition some inequalities, which we need for our estimates on codimension of unstable loci.

\begin{prop}\label{Prop ValuesOmegah0}
In the bases of fundamental weights for $\Lambda$ and $\Lambda_S$ the restriction $\iota^*$ is given by
$$
\iota^* = (2\;2\;\dots\;2){\rm A}^{-1} : \ZZ^\ell \to \ZZ \;,
$$
where $A$ and $\ell$ are the Cartan matrix and the rank of $G$, respectively. 

Denote $m=m(\mk g)=\min\{\omega_\alpha(h_0):\alpha\in\Pi\}$. The value of a fundamental weight on $h_0$ depends only on the simple ideal to which the fundamental weight belongs. For the types of simple Lie algebras we have:

1) $m({\rm A}_\ell)=\ell$ for $\ell\geq1$.

2) $m({\rm B}_\ell)=2\ell$ for $\ell\geq3$.

3) $m({\rm C}_\ell)=2\ell-1$ for $\ell\geq2$.

4) $m({\rm D}_\ell)=2\ell-2$ for $\ell\geq4$.

5) $m({\rm E}_6)=16$.

6) $m({\rm E}_7)=27$.

7) $m({\rm E}_8)=58$.

8) $m({\rm F}_4)=16$.

9) $m({\rm G}_2)=6$.

In particular, if ${\mk g}$ has no simple factors of rank 1, we have $\omega(h_0)\geq 2$ for all fundamental weights.
\end{prop}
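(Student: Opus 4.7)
The plan is to split the proposition into two pieces: the Cartan-matrix formula for the restriction map, and the type-by-type list of minima. The first is pure linear algebra, while the second reduces to the simple case and is then a direct computation with standard tables.

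For the formula, let $v_j=\omega_j(h_0)$ and view $v=(v_1,\ldots,v_\ell)^T$ as a column vector. Writing simple roots in terms of fundamental weights, $\alpha_i=\sum_j A_{ij}\omega_j$, the defining relation (\ref{For Def h0}) becomes the linear system $Av=(2,\ldots,2)^T$, hence $v=A^{-1}(2,\ldots,2)^T$. For $\nu=\sum n_i\omega_i\in\Lambda$ one then has $\iota^*(\nu)=\nu(h_0)=\sum n_iv_i=(2,\ldots,2)A^{-1}(n_1,\ldots,n_\ell)^T$, which is exactly the stated formula.

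For the case-by-case values, first observe that if $\mk g=\mk g_1\oplus\cdots\oplus\mk g_r$ is the decomposition into simple ideals, then (\ref{For Def h0}) applied on each component forces $h_0=\sum_k h_0^{(k)}$ with $h_0^{(k)}$ the principal element of $\mk g_k$; since each fundamental weight lives on a single factor, its value on $h_0$ depends only on that simple ideal. It therefore suffices to compute $m(\mk g)$ when $\mk g$ is simple. For this we use the identity $\omega_i(h_0)=2\sum_j(A^{-1})_{ij}$, i.e.\ twice the $i$-th row sum of $A^{-1}$. In type~${\rm A}_\ell$ the closed form $(A^{-1})_{ij}=\min(i,j)(\ell+1-\max(i,j))/(\ell+1)$ gives $\omega_i(h_0)=i(\ell+1-i)$, minimized at $i=1,\ell$ with value $\ell$; the remaining classical types ${\rm B}_\ell,{\rm C}_\ell,{\rm D}_\ell$ are handled analogously from their explicit inverse Cartan matrices, and the minima for the five exceptional types are finite numerical checks.

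The main obstacle is purely bookkeeping: one must track row/column conventions for the Cartan matrix, be careful about long/short root distinctions in the non-simply-laced types, and identify for each type the extremal node of the Dynkin diagram realizing the minimum (always a node corresponding to a ``small'' fundamental representation such as the standard or a spin representation). Once these conventions are fixed, every assertion reduces to elementary linear algebra together with the explicit formulas for $A^{-1}$; the concluding assertion that $\omega(h_0)\ge 2$ for all fundamental weights whenever $\mk g$ has no rank-one simple factors follows immediately from the tabulated list.
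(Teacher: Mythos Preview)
Your proposal is correct and follows essentially the same approach as the paper's own proof: the paper derives the first formula from the defining relation $\alpha(h_0)=2$ together with the fact that the Cartan matrix is the change-of-basis matrix between simple roots and fundamental weights, and then obtains the type-by-type minima by direct computation from the tables in Bourbaki. Your write-up simply makes these two steps explicit (spelling out the linear system $Av=(2,\dots,2)^T$ and giving the closed form for $A_\ell$), and your caveat about row/column conventions is well placed.
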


\begin{proof} 
The first statement follows from (\ref{For Def h0}) and the characterization of the Cartan matrix as the matrix for change of basis between the fundamental weights and the simple roots. The second statement is obtained by direct calculation using for instance the tables at the end of \cite{Bourbaki-Lie-2}.
\end{proof}

\begin{lemma}\label{Lemma sigma=w0}
Let $W_S=\{1,\sigma=-1\}$ be the Weyl group of $S$ and recall that $w_0$ denotes the longest element of $W$. We have
$$
\iota^*(w_0\lambda) = \sigma\iota^*(\lambda) \quad\textrm{for all}\quad \lambda\in\Lambda \;.
$$
\end{lemma}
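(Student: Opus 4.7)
The plan is to reduce the statement to the identity $w_0 h_0 = -h_0$ in $\mk h$, and then verify this identity by evaluating both sides on the simple roots, using the defining property (\ref{For Def h0}) of $h_0$ together with the standard fact that $w_0$ sends $\Pi$ to $-\Pi$.

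More concretely, I would first unwind the notation. Since $\iota^*(\nu) = \nu(h_0)$ for any $\nu \in \Lambda$, and since the action of $\sigma = -1 \in W_S$ on $\Lambda_S \cong \ZZ$ is multiplication by $-1$, the claim is equivalent to the scalar identity $(w_0\lambda)(h_0) = -\lambda(h_0)$ for every $\lambda \in \Lambda$. By the standard invariance of the natural pairing between $\mk h$ and $\mk h^*$ under the Weyl group, this rewrites as $\lambda(w_0^{-1} h_0) = -\lambda(h_0)$, and since $w_0^2 = 1$ it suffices to prove the single identity $w_0 h_0 = -h_0$ in $\mk h$.

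To prove $w_0 h_0 = -h_0$, I would evaluate both sides on an arbitrary simple root $\alpha \in \Pi$, which is allowed because $\Pi$ spans $\mk h^*$. On the one hand, $\alpha(-h_0) = -2$ by (\ref{For Def h0}). On the other hand,
\[
\alpha(w_0 h_0) = (w_0^{-1} \alpha)(h_0) = (w_0 \alpha)(h_0).
\]
Here one uses the well-known fact that $w_0$ permutes $-\Pi$, i.e.\ $w_0 \alpha = -\alpha^{*}$ for some $\alpha^{*} \in \Pi$ (the longest element of $W$ sends all positive roots to negative roots and preserves the set of roots of any fixed length, hence maps simple roots to negatives of simple roots). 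Consequently
\[
\alpha(w_0 h_0) = -\alpha^{*}(h_0) = -2,
\]
again by (\ref{For Def h0}). Equality on all simple roots yields $w_0 h_0 = -h_0$, and the lemma follows.

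There is no real obstacle here; the only substantive input is the combinatorial fact $w_0 \Pi = -\Pi$, which is a classical property of the longest Weyl group element and is the reason the principal element $h_0$, characterized by constant value $2$ on simple roots, is automatically sent to its negative by $w_0$.
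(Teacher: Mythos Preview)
Your proof is correct and is precisely a detailed unpacking of the paper's one-line argument, which simply says the statement follows from the definition of $h_0$ and the fact that $w_0\Pi=-\Pi$. You have made explicit the intermediate step $w_0 h_0=-h_0$ and its verification on simple roots, which is exactly what the paper leaves implicit.
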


\begin{proof}
The statement follows immediately from the definition of $h_0$ and the fact that $w_0$ sends the set of simple roots $\Pi$ to $-\Pi$.
\end{proof}

\subsection{Representations, coadjoint orbits and momentum maps}

Let $T_S\subset H_S$ be the maximal compact subgroup of $H_S$ and $K_S\subset S$ be a maximal compact subgroup containing $T_S$. Let $K\subset G$ be a maximal compact subgroup containing $K_S$. Then $K$ necessarily contains the maximal compact subgroup $T\subset H$. We fix a $K$-invariant positive definite Hermitian form $\langle,\rangle$ on $V_\lambda$, which induces the Fubini-Study form on $\PP=\PP(V_\lambda)$. We take $\langle,\rangle$ to be $\CC$-linear on the first argument. We shall consider momentum maps for this action. The classical target space of momentum maps is ${\mk k}^*$. For our calculations it is suitable to replace ${\mk k}^*$, by $i{\mk k}$, which is harmless, since the representations are isomorphic. The Killing form is positive definite on $i{\mk k}$; we denote it by $(.|.)$ and use the same notation for the induced forms on subspaces and dual spaces. This allows us to embed the weight lattice as $\Lambda\in\Lambda_\RR=i{\mk t}^*\subset i{\mk k}^*$. We define
\begin{gather}\label{For MomentumRepK}
\mu_{K} : \PP \to i{\mk k}^* \;,\quad \mu[v](\xi)= \frac{\langle\xi v,v\rangle}{\langle v,v\rangle} \;,\; [v]\in\PP,\xi\in i{\mk k}\;.
\end{gather}
We have $\mu_K(X)=K\lambda$. The momentum map for the $K_S$-action is given by restriction
\begin{gather}\label{For MomentumRepKS}
\mu= \mu_{K_S} = \iota^*\circ\mu_K: \PP \to i{\mk k}_S^* \;.
\end{gather}
For $\xi\in i{\mk k}_S$ we denote the $\xi$-component of $\mu$ by
$$
\mu^\xi : \PP \to \RR \;,\quad \mu^\xi[v]=\mu[v](\xi) \;.
$$
In the following we shall make particular use of the restrictions of $\mu$, $||\mu||^2$ and $\mu^{h_0}$ to the smooth subvariety $X\subset \PP$.

\section{The $S$-action on $G/B$ and GIT}

\subsection{The orbit structure}

Here we discuss orbit structure for the action of the principal subgroup $S\subset G$ on the flag variety $X=G/B$. Assume $\dim X\geq3$.

\begin{lemma}\label{Lemma sigma=w0 on XH}
The set of $H_S$ fixed points in $X$ is $X^{H_S}=X^H=\{x_w=wB; w\in W\}$. The Weyl group $W_S=\{1,\sigma\}$ acts on $X^H$ by
$$
\sigma:X^H \to X^H \quad,\quad x_w \to x_{w_0w} \;.
$$
\end{lemma}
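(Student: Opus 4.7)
The strategy is to handle the two assertions separately. First, I identify $X^{H_S}$ with $X^H$ using the Bruhat decomposition and the regularity of $h_0$. Then, I identify the image of the nontrivial element $\sigma\in W_S$ in $W$ with $w_0$, by invoking the uniqueness statement of Proposition~\ref{Prop CharcterizeThePrince}(ii) applied to the opposite Borel subgroup $B_S^-\subset S$.

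For the first assertion, the inclusion $X^H\subseteq X^{H_S}$ is trivial since $H_S\subset H$. For the reverse inclusion, I use the Bruhat decomposition $X=\bigsqcup_{w\in W}Bx_w$. Each Schubert cell $Bx_w$ is $B$-stable, hence $H_S$-stable, and $H$-equivariantly isomorphic to the tangent space $T_{x_w}(Bx_w)$ on which $H$ acts linearly with weights in $\Delta(N/N_{x_w})\subset\Delta^+$. By (\ref{For Def h0}) every simple root takes value $2$ on $h_0$, so every root takes a nonzero value there; in other words $h_0$ is regular. Therefore $H_S$ acts on $Bx_w$ with all nonzero weights, and its unique fixed point in the cell is the origin $x_w$. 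Taking the union over $w\in W$ yields $X^{H_S}=X^H$.

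For the second assertion, I fix a representative $\dot\sigma\in N_S(H_S)$ of $\sigma$. The first sub-step is to show $\dot\sigma\in N_G(H)$. Regularity of $h_0$ gives $\mk{z}_{\mk g}(H_S)=\mk{z}_{\mk g}(h_0)=\mk h$, hence $Z_G(H_S)^\circ=H$; since $\dot\sigma$ normalizes $H_S$, it normalizes its centralizer, hence the identity component $H$. The second sub-step is to identify the image $w_\sigma$ of $\dot\sigma$ in $W=N_G(H)/H$. Conjugation by $\dot\sigma$ maps $B_S$ to its opposite Borel $B_S^-\subset S$, whose Lie algebra $\CC h_0\oplus\CC e_-$ is contained in $\bar{\mk b}$. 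By Proposition~\ref{Prop CharcterizeThePrince}(ii), $B_S^-$ is contained in a unique Borel subgroup of $G$, which must therefore be $\bar B$. Hence $\dot\sigma B\dot\sigma^{-1}=\bar B$, so $w_\sigma$ maps $\Delta^+$ to $\Delta^-$, forcing $w_\sigma=w_0$. Writing $\dot\sigma=\dot w_0\, h$ with $h\in H$ and picking a representative $\dot w$ of $w$, I conclude
$$
\sigma\cdot x_w=\dot\sigma\dot w B=\dot w_0 h\dot w B=\dot w_0\dot w(\dot w^{-1}h\dot w)B=\dot w_0\dot w B=x_{w_0w},
$$
since $\dot w^{-1}h\dot w\in H\subset B$.

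The main obstacle is the identification $w_\sigma=w_0$; all the other parts reduce to routine regularity and Bruhat-cell considerations, while this step depends essentially on the uniqueness clause in Proposition~\ref{Prop CharcterizeThePrince}(ii) applied to the opposite Borel $B_S^-$.
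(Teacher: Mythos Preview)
Your proof is correct. For the first assertion, both you and the paper argue via the regularity of $h_0$; you are simply more explicit, going cell by cell through the Bruhat decomposition, whereas the paper just says in one line that $H_S$ is a regular one-parameter subgroup and hence $X^{H_S}=X^H$.

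For the second assertion your route is genuinely different. The paper embeds $X$ in $\PP(V_\lambda)$ for a generic $\lambda\in\Lambda^{++}$, so that the $W_S$-equivariant map $x_w\mapsto w\lambda(h_0)\in\ZZ$ is injective; since $\sigma$ acts by $-1$ on $\ZZ$ and, by Lemma~\ref{Lemma sigma=w0}, $w_0w\lambda(h_0)=-w\lambda(h_0)$, injectivity forces $\sigma\cdot x_w=x_{w_0w}$. You instead stay entirely in group theory: regularity gives $Z_G(H_S)^\circ=H$, so $\dot\sigma\in N_G(H)$; then Proposition~\ref{Prop CharcterizeThePrince}(ii), applied to $B_S^-\subset\bar B$, forces $\dot\sigma B\dot\sigma^{-1}=\bar B$ and hence $w_\sigma=w_0$. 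Your argument is more self-contained and avoids the auxiliary choice of a generic $\lambda$, while the paper's argument ties in with the momentum-map formalism used throughout and leans on Lemma~\ref{Lemma sigma=w0}, which is needed later anyway. Both are short; yours is arguably the cleaner structural explanation of \emph{why} $\sigma$ corresponds to $w_0$.
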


\begin{proof}
Since $H_S$ is a regular one-parameter subgroup in $G$, i.e., it is contained in a unique Cartan subgroup, we have $X^{H_S}=X^H$. The Weyl group $W_S$ acts on $X^{H_S}$. We shall use a projective embedding to show that $\sigma$ acts in $X^H$ as $w_0$. Let $\lambda^{++}$ and let $\varphi X\to\PP(V_\lambda)$ be the corresponding embedding. Then we have $\varphi(x_w)=[v^{w\lambda}]$, which defines a $W$-equivariant embedding $X^H\to\Lambda$, $x_w\mapsto w\lambda$. We may further restrict weights by $\iota^*\Lambda\to\Lambda_S$ and the composition 
$$
\mu^{h_0}=\iota^*\circ\varphi:X^H\to\Lambda_S\cong \ZZ \;,\quad x_w\mapsto w\lambda(h_0)
$$
is $W_S$ equivariant. We may further choose $\lambda$ avoiding the hyperplanes defined by $w\lambda(h_0)= 0$ and $w\lambda(h_0)= w'\lambda(h_0)$ for all $w,w'\in W$, so that the above map $\mu^{h_0}$ is injective. It follows that $\sigma$ has no fixed points in $X^H$, so it is uniquely determined and by Lemma \ref{Lemma sigma=w0} we must have $\sigma(x_w)=x_{w_0w}$.
\end{proof}

\begin{prop}\label{Prop S-Orbits X}
\begin{enumerate}
\item[\rm (i)] The orbits of $S$ in $X$ have dimensions 1, 2 and 3. 

\item[\rm (ii)] The orbits of dimension 1 and 2 are exactly the orbits through the fixed point set of the maximal torus $H\subset G$, $X^H=\{x_w=wB; w\in W\}$. There is a unique 1-dimensional orbit 
$$
C=S[x_1]=S[x_{w_0}]\cong S/B_S\cong\PP^1 \;.
$$
There are $\frac{|W|}{2}-1$ two-dimensional orbits 
$$
S[x_w]=S[x_{w_0w}]\cong S/H_S\cong (\PP^1\times\PP^1)\setminus(diagonal) \quad{\rm for}\quad w\in W\setminus\{1,w_0\} \;.
$$

\item[\rm (iii)] The isotropy subgroup of any 3-dimensional orbit is either trivial or finite abelian.

\item[\rm (iv)] Every $S$-orbit in $X$ has a finite number of orbits in its closure.
\end{enumerate}
\end{prop}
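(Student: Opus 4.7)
The plan is to analyze the possible isotropy subgroups $S_x$ case by case, using two inputs: every $G$-isotropy in $X$ is a (solvable) Borel subgroup of $G$, and the principal-subgroup properties recorded in Proposition~\ref{Prop CharcterizeThePrince}, especially the uniqueness of the Borel of $\mk g$ containing $e_+$. For (i), since $\dim S=3$ the orbits have dimension at most $3$; a fixed point $x$ would give $S\subset\mathrm{stab}_G(x)$, a Borel of $G$, contradicting the fact that $S$ is non-solvable. Hence every orbit has dimension $1$, $2$, or $3$.

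For (ii), fix $Sx$ of dimension $<3$ and consider $S_x^0$, a positive-dimensional proper connected subgroup of $S$. Such a subgroup is either a Borel of $S$, a maximal torus, or a one-parameter unipotent subgroup. The key reduction is the following fixed-point analysis: by Proposition~\ref{Prop CharcterizeThePrince}(iv), $e_+$ is contained in a unique Borel subalgebra of $\mk g$, so the fixed-point set of $\exp(\CC e_+)$ in $X$ equals $\{x_1\}$; applying this to any $S$-conjugate of $\exp(\CC e_+)$, and likewise for $e_-$, shows that every one-parameter unipotent subgroup of $S$ has a single fixed point in $X$, lying in $Sx_1\cup Sx_{w_0}=C$, with the latter equality coming from Lemma~\ref{Lemma sigma=w0 on XH}. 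Analogously, Proposition~\ref{Prop CharcterizeThePrince}(ii) says that $B_S$ is contained in a unique Borel of $G$, namely $B$, so $B_S$ fixes only $x_1$. Combining these observations: if $S_x^0$ is either unipotent or a Borel of $S$, then $Sx=C\cong S/B_S\cong\PP^1$; if $S_x^0$ is a torus $T'\subset S$, then $T'$ is $S$-conjugate to $H_S$, so some $S$-translate of $x$ lies in $X^{H_S}=X^H$. Thus every orbit of dimension $<3$ meets $X^H$. For $w\in W\setminus\{1,w_0\}$, $S_{x_w}^0$ is forced to be $H_S$ by the above (neither $B_S$ nor $\bar B_S$ fixes $x_w$), and the element $\sigma\in N_S(H_S)/H_S$ moves $x_w$ to $x_{w_0w}\ne x_w$ by Lemma~\ref{Lemma sigma=w0 on XH}, so $S_{x_w}=H_S$ and $Sx_w\cong S/H_S\cong(\PP^1\times\PP^1)\setminus\mathrm{diag}(\PP^1)$. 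The same lemma pairs the $|W|-2$ points of $X^H\setminus\{x_1,x_{w_0}\}$ into $\tfrac12|W|-1$ two-dimensional orbits.

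For (iii), a $3$-dimensional orbit has finite $S_x$ sitting inside the $G$-isotropy $\mathrm{stab}_G(x)=B'=H'\ltimes N'$, a Borel of $G$; since $N'$ is unipotent and hence torsion-free over $\CC$, the projection $S_x\hookrightarrow B'/N'=H'$ is injective, exhibiting $S_x$ as a finite subgroup of a torus, hence finite abelian. For (iv), the boundary $\ol{Sx}\setminus Sx$ is closed, $S$-invariant, and of strictly smaller dimension than $Sx$, so by (ii) it is contained in the finite union $C\cup\bigcup_{w\in W\setminus\{1,w_0\}}Sx_w$ and therefore decomposes into finitely many orbits.

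The main obstacle is the unipotent case in (ii): ruling out unipotent isotropy outside $C$ is the only place where the principal property of $S$ enters essentially, through the fact that $e_+$ lies in a unique Borel subalgebra of $\mk g$. Once this is secured, the orbit enumeration reduces to bookkeeping with $X^H$ and the involution $\sigma$ of Lemma~\ref{Lemma sigma=w0 on XH}, and parts (iii) and (iv) follow from standard structural facts about Borel subgroups and dimension counting.
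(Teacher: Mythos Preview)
Your proof is correct and, for parts (i), (ii), and (iv), follows the same line as the paper: classify the possible isotropy subgroups of $S$, use Proposition~\ref{Prop CharcterizeThePrince}(iv) to see that a one-parameter unipotent subgroup of $S$ has a single fixed point in $X$ (hence no $S/N_S$-type orbits), and invoke Lemma~\ref{Lemma sigma=w0 on XH} for the $\sigma$-pairing $x_w\leftrightarrow x_{w_0w}$ on $X^H$.

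The one genuine difference is in part (iii). The paper argues via the compact form: a finite isotropy group $\Gamma$ is compact, hence contained in some maximal compact $K\subset G$; since $X\cong K/T$, every $K$-isotropy is a conjugate of $T$, so $\Gamma$ embeds in a compact torus and is abelian. You instead stay on the algebraic side: $S_x$ sits inside a Borel $B'=H'\ltimes N'$ of $G$, and since the unipotent radical $N'$ is torsion-free over $\CC$, the quotient map $B'\to H'$ is injective on the finite group $S_x$, exhibiting it as a subgroup of a torus. Both arguments are short; yours has the mild advantage of not invoking the compact real form or the identification $X\cong K/T$, while the paper's is perhaps more in keeping with the momentum-map framework used elsewhere.
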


\begin{proof}
The 1-dimensional orbit $S$-orbit in $C\subset X$ is unique by definition. A 2-dimensional orbit has a 1-dimensional isotropy group. In $S\cong SL_2\CC$, the 1-dimensional subgroups are conjugate to the unipotent $N_S=N\cap S$, the Cartan subgroup $H_S$, or its normalizer\footnote{Apologies for the notation!} $N_S(H_S)$. The unipotent subgroup $N_S$ has a unique fixed point in $X$, $x_1$, because $N$ is the only maximal unipotent subgroup of $G$ containing $N_S$ (see Prop. \ref{Prop CharcterizeThePrince}, (iv)). Since $S_{x_1}=B_S$, there are no orbits of the form $S/N_S$ in $X$. It follows that any 2-dimensional orbit must contain an $H_S$-fixed point $x_w$. Since $H_S$ is a regular one-parameter subgroup in $G$, i.e., it is contained in a unique Cartan subgroup, we have $X^{H_S}=X^H$. We know from Lemma \ref{Lemma sigma=w0} that $\sigma\in W_S$ has no fixed points in $X^H$ and acts by $\sigma(x_w)=x_{w_0w}$, Hence $N_S(H_S)$ has no fixed points in $X$. We can conclude that the 2-dimensional orbits are 
$S[x_w]=S[x_{w_0w}]\cong S/H_S$ for $w\in W\setminus\{1,w_0\}$. In particular they are finitely many. This implies (ii) and also (i), with the assumption $\dim X\geq3$. Part (iv) follows immediately, since we have an algebraic action, so the closure of any orbit contains only orbits of smaller dimension. For part (iii) we use again the fact that the action is algebraic, so a 0-dimensional isotropy subgroup $\Gamma$ must be finite, thus compact, and hence contained in a maximal compact subgroup $K\subset G$. The group $K$ acts transitively on $X$ and $X\cong K/T$, where $T$ is a Cartan subgroup of $K$. Hence $\Gamma$ must be abelian.
\end{proof}

\begin{rem}
Nonabelian finite subgroups of $S$ can be obtained as isotropy subgroups for actions on partial flag varieties $G/P$. For instance the symmetry group of a tetrahedron appears as the isotropy subgroup of the unique 3-dimensional orbit in $\PP^3$ of the principal subgroup $S\subset SL_4\CC$, given by the 4-dimensional irreducible representation of $SL_2$.
\end{rem}

\subsection{The Hilbert-Mumford criterion}

We shall use the Hilbert-Mumford criterion to detect instability. Here we present its specific form in the case of the principal subgroup. The criterion, in its general formulation, reduces verification of the instability for a reductive group to verification of instability for dominant $\CC^\times$-subgroups. Since $S$ has rank 1, all its $\CC^\times$-subgroups are Cartan subgroups and are conjugate. Thus the detection of $S$-instability is reduced to $H_S$-instability. We formulate this in the following lemma, which is a direct application of the Hilbert-Mumford criterion for our case, and which is essential for our calculations.

\begin{lemma}\label{Lemma Hilb-Mumf for S}
Assume $\dim X\geq 3$. Let $\lambda\in\Lambda^{++}$. Let $x\in X_S^3$, i.e., $\dim Sx=3$. Then $H_S$ has two fixed points in $\ol{H_Sx}$, say $x_{w_1}$ and $x_{w_2}$. The following are equivalent:
\begin{enumerate}
\item[\rm (i)] $x$ is $S$-unstable with respect to ${\mc L}_\lambda$.

\item[\rm (ii)] $x$ is $H_S$-unstable with respect to ${\mc L}_\lambda$.

\item[\rm (iii)] $w_1\lambda(h_0)$ and $w_2\lambda(h_0)$ are both nonzero and have the same sign, i.e., $w_1\lambda(h_0)w_2\lambda(h_0)>0$.
\end{enumerate}
In particular, $X_S^3\cap X_{us,S}(\lambda)= X_S^3\cap X_{us,H_S}(\lambda)$.
\end{lemma}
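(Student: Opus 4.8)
The plan is to prove the cycle (ii) $\Rightarrow$ (i), (ii) $\Leftrightarrow$ (iii), and the crux (i) $\Rightarrow$ (ii), with all the difficulty concentrated in the last implication. Throughout I write $x=[v_x]$ with $v_x$ in the affine cone over $X\subset\PP(V_\lambda)$, so that $v_x$ lies in the $G$-orbit of a highest weight vector, and I decompose $v_x=\sum_\nu (v_x)_\nu$ into $H$-weight components; the only relevant data are the integers $\nu(h_0)$ for which $(v_x)_\nu\neq 0$. Since $S\cong SL_2\CC$ has rank one, every one-parameter subgroup of $S$ lies in a maximal torus and all maximal tori are $S$-conjugate to $H_S$; hence the Hilbert--Mumford criterion reduces testing $S$-instability to testing instability along the conjugates $gH_Sg^{-1}$, $g\in S$. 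The inclusion $H_S\subset S$ then gives (ii) $\Rightarrow$ (i) at once: an $H_S$-destabilizing one-parameter subgroup is a fortiori $S$-destabilizing, whence $X_{us,H_S}(\lambda)\subset X_{us,S}(\lambda)$.

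For (ii) $\Leftrightarrow$ (iii) I would make the $\CC^\times$-criterion explicit on $\ol{H_Sx}$. Because $\dim Sx=3$, the isotropy $S_x$ is finite by Proposition \ref{Prop S-Orbits X}(iii), so $H_S$ has no fixed point in the open orbit $Sx$, and $\ol{H_Sx}\cong\PP^1$ has exactly the two fixed points $x_{w_1},x_{w_2}$, occurring as $\lim_{t\to0}$ and $\lim_{t\to\infty}$ of the $h_0$-one-parameter subgroup; these are $H$-fixed points by Lemma \ref{Lemma sigma=w0 on XH}. These two limits single out precisely the components of $v_x$ of extreme $h_0$-weight, so $w_1\lambda(h_0)$ and $w_2\lambda(h_0)$ are the minimal and maximal values of $\nu(h_0)$ over the support of $v_x$. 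The Hilbert--Mumford criterion for the single torus $H_S$ acting on $\PP(V_\lambda)$ with the linearization $\mc L_\lambda$ then states that $x$ is $H_S$-semistable iff $0$ lies in the closed interval spanned by these extreme weights, i.e. $w_1\lambda(h_0)\le 0\le w_2\lambda(h_0)$ after ordering. Negating this gives exactly condition (iii), that the two values are nonzero and of the same sign, which settles (ii) $\Leftrightarrow$ (iii).

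The substantive direction is (i) $\Rightarrow$ (ii), and I expect it to be the \emph{main obstacle}. The rank-one reduction only yields that some conjugate torus destabilizes, that is, $g^{-1}x$ is $H_S$-unstable for some $g\in S$; the task is to descend this to $x$ itself. Here the hypothesis $x\in X_S^3$ is indispensable, and without it the implication genuinely fails: for a generic point $[gv^\lambda]$ of the principal curve $C$, the $S$-orbit is the null cone of the irreducible $S$-submodule $\langle S v^\lambda\rangle$, so the point is $S$-unstable, while its $h_0$-weight support is the full symmetric string $\{b,b-2,\dots,-b\}$ about $0$, so it is $H_S$-semistable; the failure of the finiteness of $S_x$ (here $S_x$ is a Borel of $S$) is exactly what breaks the descent. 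One route to the descent uses Kempf's optimal one-parameter subgroup for $x$, which is canonical up to conjugacy by its associated parabolic $P\subset S$, with $S_x\subset P$; the finiteness of $S_x$ should force the optimal subgroup to be conjugated into $H_S$ without disturbing $x$, making $x$ itself $H_S$-unstable.

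A cleaner route, better aligned with the momentum-map framework of the paper, is to invoke the Kempf--Ness/Kirwan criterion: $x$ is $S$-semistable iff $\ol{Sx}$ meets $\mu^{-1}(0)$, i.e. $0\in\mu_{K_S}(\ol{Sx})$. One then analyses $\|\mu\|^2$ on $\ol{Sx}$, whose critical set is the finite set of boundary $H$-fixed points $x_w$ by Lemma \ref{Lemma sigma=w0 on XH}, together with the restriction of the component $\mu^{h_0}$ to the curve $\ol{H_Sx}\cong\PP^1$, on which it is a perfect Morse function with extrema $w_1\lambda(h_0)$ and $w_2\lambda(h_0)$; this identifies $\min_{\ol{Sx}}\|\mu\|=0$ with the sign condition (iii) failing, i.e. with $S$-semistability. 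Either way, once (i) $\Leftrightarrow$ (ii) $\Leftrightarrow$ (iii) is established, the concluding identity $X_S^3\cap X_{us,S}(\lambda)=X_S^3\cap X_{us,H_S}(\lambda)$ is simply the equivalence (i) $\Leftrightarrow$ (ii) read over all $x\in X_S^3$.
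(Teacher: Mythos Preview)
The paper does not actually prove this lemma; the sentence preceding it says only that it is ``a direct application of the Hilbert--Mumford criterion for our case.'' Your treatment is considerably more careful than that. The implications (ii)$\Rightarrow$(i) and (ii)$\Leftrightarrow$(iii) are handled correctly and cleanly, and you are right to isolate (i)$\Rightarrow$(ii) as the substantive step: Hilbert--Mumford for the rank-one group $S$ only yields that some $S$-conjugate $g^{-1}x$ is $H_S$-unstable, not that $x$ itself is. Your counterexample on the principal curve $C$ is valid and pinpoints exactly why the hypothesis $x\in X_S^3$ is needed --- a generic point of $C$ is $S$-unstable (the orbit is closed) but $H_S$-semistable, since its $h_0$-weight support is the full symmetric string $\{\lambda(h_0),\lambda(h_0)-2,\dots,-\lambda(h_0)\}$.

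That said, neither of your two proposed routes for (i)$\Rightarrow$(ii) is actually carried through. In the Kempf route, the assertion that ``the finiteness of $S_x$ should force the optimal subgroup to be conjugated into $H_S$ without disturbing $x$'' is not justified: Kempf's theorem gives $S_x\subset P(x)$ for the optimal parabolic $P(x)$ (here a Borel of $S$), but finiteness of $S_x$ supplies no link between $P(x)$ and the \emph{particular} torus $H_S$; nothing prevents $P(x)$ from being a Borel not containing $H_S$. In the momentum-map route, you claim the analysis ``identifies $\min_{\ol{Sx}}\|\mu\|=0$ with the sign condition (iii) failing,'' but the passage from $\mu^{h_0}$ changing sign on $\ol{H_Sx}$ to the full $\mu_{K_S}$ vanishing somewhere on $\ol{Sx}$ is left unexplained: $\mu^{h_0}(y)=0$ kills only one of the three components of $\mu_{K_S}(y)\in i\mk k_S^*$, so a zero of $\mu^{h_0}$ on $\ol{H_Sx}$ need not lie in $\mu_{K_S}^{-1}(0)$. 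Thus, while you have gone well beyond the paper's bare assertion and correctly located the difficulty, the key implication (i)$\Rightarrow$(ii) remains a gap in your proposal --- and, for that matter, in the paper itself.
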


\begin{rem}
Let $x\in X$. Consider the orbit $H_Sx$. The set of $H_S$-fixed points $\ol{Sx}^{H_S}$ has two elements. The nontrivial element $\sigma\in W_S$ acts on both $\ol{Sx}^{H_S}$ and $\ol{N_S(H_S)}^{H_S}$. The set $\ol{N_S(H_S)}^{H_S}$ has either 2 or 4 elements; its image under $\mu^{h_0}$ belongs to $\ZZ$ and is stable under $\sigma$ which acts here by -1.
\end{rem}

\subsection{A lemma on Weyl group elements of small length}

In our calculation of dimensions of unstable loci, we shall need estimates for the length of Weyl group elements $w$ such that $w\lambda(h_0)<0$ for a given (strictly) dominant weight $\lambda$. We obtain such estimates using Proposition \ref{Prop ValuesOmegah0}. We record here the following lemma, which will help us detect cases with empty semistable locus and cases with unstable divisors (see Theorem \ref{Theo KirwanStratXusSlambda}). Recall that any dominant weight $\lambda\in\Lambda^+$ can be written in terms of the fundamental weights as
$$
\lambda = \sum\limits_{\alpha\in\Pi} \lambda_\alpha \omega_\alpha \quad {\rm with} \quad \lambda_\alpha = n_{\lambda,\alpha} = 2\frac{(\lambda|\alpha)}{(\alpha|\alpha)} \;.
$$
Recall also that $\mc E_0=\mc E_0(S\subset G)=\{\lambda\in\Lambda^+:V_\lambda^S\ne0\}$ denotes the nulleigenmonoid for the principal subgroup. 

\begin{lemma}\label{Lemma LengthEstimates} Let $\lambda\in\Lambda^{++}$.
\begin{enumerate}
\item[{\rm (i)}] If $w,w'\in W$ are related by the Bruhat order on the Weyl group as $w'\prec w$, then $w'\lambda(h_0)> w\lambda(h_0)$.
\item[{\rm (ii)}] If $G$ has no simple factors of type ${\rm A}_1$, then $w\lambda(h_0)>0$ for all $w\in W$ with $l(w)\leq 1$. More precisely, if $s_\alpha\lambda(h_0)<0$ for some $\alpha\in\Pi$, then $\alpha$ is orthogonal to all other simple roots and thus corresponds to a simple factor of $G$ of type ${\rm A}_1$. Furthermore, we have $\lambda\notin {\rm Cone}(\mc E_0)$.
\item[{\rm (iii)}] If $G$ has no simple factors of type ${\rm A}_1$, ${\rm A}_2$, ${\rm C}_2$, then $w\lambda(h_0)>0$ for all $w\in W$ with $l(w)\leq 2$. If $s_\beta s_\alpha \lambda(h_0)<0$ for some $\alpha,\beta\in\Pi$, then one of the following occurs:\\
1) $\alpha,\beta$ are the simple roots of a simple factor of $G$ of type ${\rm A}_2$;\\
2) $\alpha,\beta$ are the simple roots of a simple factor of $G$ of type ${\rm C}_2$;\\
3) At least one of the roots $\alpha,\beta$ is the simple root of a simple factor of $G$ of type ${\rm A}_1$.

\end{enumerate}
\end{lemma}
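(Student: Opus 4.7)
My plan is a direct computation, expanding $w\lambda(h_0)$ in the basis of fundamental weights and invoking Proposition~\ref{Prop ValuesOmegah0}.

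For part~(i), I would use the standard fact that for dominant $\lambda$, $w'\leq w$ in Bruhat order implies $w'\lambda - w\lambda \in \sum_{\alpha\in\Pi}\ZZ_{\geq 0}\,\alpha$. This is proved by induction along a chain of covering relations $w_{i+1}=s_{\gamma_i}w_i$ of increasing length: at each step $w_i\lambda - w_{i+1}\lambda = \langle w_i\lambda,\gamma_i^\vee\rangle\gamma_i$ is a non-negative multiple of the positive root $\gamma_i$, since $\langle w_i\lambda,\gamma_i^\vee\rangle\geq 0$ whenever $l(s_{\gamma_i}w_i)>l(w_i)$ and $\lambda$ is dominant. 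Because $\alpha(h_0)=2$ on every simple root, every positive root is strictly positive on $h_0$, so $(w'\lambda-w\lambda)(h_0)\geq 0$. For $w'\prec w$ strict and $\lambda$ strictly dominant, the $W$-stabilizer of $\lambda$ is trivial, so $w'\lambda\neq w\lambda$ and the inequality is strict.

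For part~(ii), I would expand
\[
s_\alpha\lambda(h_0) = \lambda(h_0) - 2\lambda_\alpha = \lambda_\alpha\bigl(\omega_\alpha(h_0)-2\bigr) + \sum_{\beta\neq\alpha}\lambda_\beta\,\omega_\beta(h_0).
\]
Under the hypothesis that $G$ has no ${\rm A}_1$ factor, Proposition~\ref{Prop ValuesOmegah0} gives $\omega_\gamma(h_0)\geq 2$ for every fundamental weight, so both summands are non-negative and the second is strictly positive (some $\beta\neq\alpha$ exists since each simple factor has rank $\geq 2$). Conversely, negativity of $s_\alpha\lambda(h_0)$ forces $\omega_\alpha(h_0)<2$, i.e.\ $\omega_\alpha(h_0)=1$, which by the proposition happens precisely when $\alpha$ spans an ${\rm A}_1$ factor by itself and is hence orthogonal to the remaining simple roots. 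For $\lambda\notin {\rm Cone}(\mc E_0)$, I would decompose $G=G_\alpha\times G'$ along this ${\rm A}_1$ factor and write $V_\lambda\cong V_{\lambda_\alpha}\boxtimes V_{\lambda'}$; since the principal $S$ projects isomorphically onto $G_\alpha$, using self-duality of $V_{\lambda_\alpha}$ as an ${\rm A}_1$-module gives
\[
V_\lambda^S \cong {\rm Hom}_S\bigl(V_{\lambda_\alpha},\,V_{\lambda'}|_S\bigr),
\]
which is nonzero iff $\lambda_\alpha$ is at most the highest $H_S$-weight $\lambda'(h_0) = \lambda(h_0)-\lambda_\alpha$ of $V_{\lambda'}|_S$, i.e.\ iff $s_\alpha\lambda(h_0)\geq 0$. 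This failure persists under scaling $\lambda\mapsto k\lambda$, giving $\lambda\notin {\rm Cone}(\mc E_0)$.

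For part~(iii), the analogous computation for $w=s_\beta s_\alpha$ with $\alpha\neq\beta$ yields
\[
s_\beta s_\alpha\lambda(h_0) = \lambda_\alpha\bigl(\omega_\alpha(h_0)-2+2a_{\alpha\beta}\bigr) + \lambda_\beta\bigl(\omega_\beta(h_0)-2\bigr) + \sum_{\gamma\neq\alpha,\beta}\lambda_\gamma\,\omega_\gamma(h_0),
\]
with $a_{\alpha\beta}=\langle\alpha,\beta^\vee\rangle$. The new feature is the coefficient $\omega_\alpha(h_0)-2+2a_{\alpha\beta}$, which can be strongly negative when $|a_{\alpha\beta}|$ is $2$ or $3$. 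My strategy is a case analysis: if $\alpha,\beta$ lie in different simple factors then $a_{\alpha\beta}=0$ and positivity reduces to the argument in~(ii); otherwise one goes type by type through the classification, using Proposition~\ref{Prop ValuesOmegah0} and the entries of the Cartan matrix to bound the expression. The main obstacle is exactly this case analysis, since in low-rank simple factors the negative first coefficient can outweigh the other positive terms; the tabulated values $m(\mk g_i)$ are calibrated so that outside the excluded types ${\rm A}_1,{\rm A}_2,{\rm C}_2$ the contributions from $\lambda_\beta(\omega_\beta(h_0)-2)$ and from the remaining simple roots always compensate. The three exceptional cases 1)--3) of the statement are then read off directly from this same case-by-case inspection, together with~(ii) for the situations in which one of $\alpha,\beta$ is already an ${\rm A}_1$-root.
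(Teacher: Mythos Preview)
Your approach is essentially the paper's: the same expansion in fundamental weights, the same appeal to Proposition~\ref{Prop ValuesOmegah0}, and the same factorisation argument for $\lambda\notin{\rm Cone}(\mc E_0)$ in~(ii). Two small points. In~(ii) your ``iff'' is stronger than what you need or can easily prove; only the implication $\lambda_\alpha>\lambda'(h_0)\Rightarrow V_\lambda^S=0$ is required, and that is immediate from the highest-weight bound. In~(iii) your word ``compensate'' is misleading: since $\lambda_\alpha$ may be taken arbitrarily large relative to the other coordinates, a negative coefficient $\omega_\alpha(h_0)-2+2a_{\alpha\beta}$ cannot be absorbed by the remaining terms, so what you must actually show is that this coefficient is itself $\geq 0$ outside the listed exceptions. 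The paper does this not type-by-type but by the four possible values $a_{\alpha\beta}\in\{0,-1,-2,-3\}$, so that $2(1-a_{\alpha\beta})\in\{2,4,6,8\}$, and then reads off from Proposition~\ref{Prop ValuesOmegah0} which simple factors can have $\omega_\alpha(h_0)$ below the relevant threshold; this is cleaner than running through the classification.
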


\begin{proof}
The Bruhat order is defined by $w'\preceq w$ if $x_{w'}\in\ol{Bx_w}\subset X$ with $w'\prec w$ if $w'\ne w$. The linear span of the Schubert variety in $V_\lambda$ is the Demazure $B$-module $V_{B,w\lambda}$ whose weights are exactly the weights of $V_\lambda$ contained in $w\lambda +Q_+$. Thus $w'\lambda=w\lambda+q$ for some sum of positive roots $q$. Since $q(h_0)>0$, we have $w'\lambda(h_0)>w\lambda(h_0)$. This proves part (i).

Let us consider the action of a simple reflection. Since the principal element $h_0$ is defined by $\alpha(h_0)=2$ for $\alpha\in\Pi$, we have
$$
s_\alpha \nu (h_0) = \nu(h_0) - 2n_{\nu,\alpha} \quad{\rm for}\quad \alpha\in\Pi,\nu\in \Lambda\;.
$$
For part (ii), we recall that for $\alpha\in\Pi$, we have $n_{\omega_\alpha,\alpha}=1$ and $n_{\omega_{\beta},\alpha}=0$ for $\beta\in\Pi\setminus\{\alpha\}$. Hence
$$
s_\alpha\lambda(h_0) = \lambda(h_0)-2\lambda_\alpha = \lambda_\alpha(\omega_\alpha(h_0) - 2) + \sum\limits_{\beta\in\Pi\setminus\alpha} \lambda_\beta\omega_\beta(h_0) \;.
$$
By Proposition \ref{Prop ValuesOmegah0} we have $\omega_\alpha(h_0)-2< 0$ if and only if $\alpha$ is orthogonal to all other simple roots. This implies that, if the root system $\Delta$ has no simple subsystems of type ${\rm A}_1$, then $s_\alpha\lambda(h_0)> 0$ for all $\alpha\in\Pi$. If $\alpha$ is orthogonal to all other simple roots, then
$$
s_\alpha\lambda(h_0)< 0 \;\tst\; \lambda_\alpha > \sum\limits_{\beta\in\Pi\setminus\alpha} \lambda_\beta\omega_\beta(h_0) \;.
$$
Suppose $s_\alpha\lambda(h_0)< 0$ holds, and let $G=S_1\times G_2$ be the factorization of $G$ so that $\alpha$ is the simple root of $S_1$. Let $S_2\subset G_2$ be the projection of $S$ to the second factor, which is a principal subgroup of $G_2$. Then we have $S\stackrel{diag}{\hookrightarrow} S_1\times S_2 \subset S_1\times G_2$. We can now consider $G$-modules as tensor products of $S_1$- and $G_2$-modules, so that $V_{G,\lambda}=V_{S_1,\lambda_1}\otimes V_{G_2,\lambda_2}$, with $\lambda_1=\lambda_\alpha$ and $\lambda_2=\sum\limits_{\beta\in\Pi\setminus\alpha} \lambda_\beta\omega_\beta$. We have
$$
(V_{G,\lambda})^S = (V_{S_1,\lambda_\alpha}\otimes V_{G_2,\lambda_2})^S \cong {\rm Hom}_{S_2}(V_{S_2,\lambda_\alpha},V_{G_2,\lambda_2}) = 0 \quad{\rm if}\quad \lambda_\alpha > \sum\limits_{\beta\in\Pi\setminus\alpha} \lambda_\beta\omega_\beta(h_0) \;.
$$
This proves part (ii). Let us now turn to part (iii) and Weyl group elements of length 2. Such elements have the form $w=s_\beta s_\alpha$ with two distinct simple roots $\alpha,\beta\in\Pi$. We have

$$
s_\beta s_\alpha \lambda = s_\beta(\lambda-\lambda_\alpha \alpha) = \lambda - \lambda_\alpha \alpha - \lambda_\beta \beta + \lambda_\alpha n_{\beta,\alpha} \beta \;.
$$
Hence
\begin{gather*}
\begin{array}{rl}
s_\beta s_\alpha \lambda(h_0) &= \lambda(h_0) - 2(\lambda_\alpha+\lambda_\beta) + 2n_{\beta,\alpha}\lambda_\alpha \\
& = \lambda_\alpha(\omega_\alpha(h_0) - 2(1-n_{\beta,\alpha})) + \lambda_\beta(\omega_\beta(h_0)-2) + \sum\limits_{\gamma\in\Pi\setminus\{\alpha,\beta\}} \lambda_\gamma\omega_\gamma(h_0) \;.
\end{array}
\end{gather*}
Since $\alpha,\beta$ are simple roots, we have $n_{\beta,\alpha}\leq 0$. We need to estimate the numbers $(\omega_\beta(h_0)-2)$ and $(\omega_\alpha(h_0) - 2(1-n_{\beta,\alpha}))$. In part (ii) we already observed that $(\omega_\beta(h_0)-2)\geq 0$ unless $\beta$ is orthogonal to all other simple roots, in which case $(\omega_\beta(h_0)-2)=-1$. We shall now estimate the second number: {\it The inequality
\begin{gather}\label{For omegaalpha vs 1-nalphabeta}
\omega_\alpha(h_0) \geq 2(1-n_{\alpha,\beta})
\end{gather}
holds for all $\alpha,\beta\in\Pi$, except when $\alpha$ is orthogonal to all other simple roots, or when $\alpha$ and $\beta$ are the simple roots of a simple ideal $\hat{\mk g}$ of ${\mk g}$ of type ${\rm A}_2$ or ${\rm C}_2$ with $\alpha$ being the long simple root.} Indeed, recall that $n_{\beta,\alpha}\in\{0,-1,-2,-3\}$, whence $2(1-n_{\beta,\alpha})\in\{2,4,6,8\}$. Thus the inequality $(\omega_\alpha(h_0) - 2(1-n_{\beta,\alpha}))<0$ puts, via Proposition \ref{Prop ValuesOmegah0}, a restriction on the simple ideal $\hat {\mk g}$ of ${\mk g}$ to which $\alpha$ belongs. We consider the possible values of $n_{\beta,\alpha}$.

If $n_{\beta,\alpha}=0$, i.e., $\alpha$ and $\beta$ are orthogonal, then $(\omega_\alpha(h_0) - 2(1-n_{\beta,\alpha}))=\omega_\alpha(h_0) - 2<0$ if and only if $\alpha$ is orthogonal to all other simple roots.

If $n_{\beta,\alpha}=-3$, then $\alpha$ and $\beta$ are the long and short simple roots of $\hat{\mk g}={\mk g}_2$, respectively. In this case we have $\omega_\alpha(h_0)=10>8$ (and $\omega_\beta(h_0)=6$), so $(\omega_\alpha(h_0) - 2(1-n_{\beta,\alpha}))>0$.

If $n_{\beta,\alpha}=-2$, then $\hat{\mk g}$ has two root lengths, $\alpha$ is long and $\beta$ is short. According to Proposition \ref{Prop ValuesOmegah0}, if $\omega_\alpha(h_0)<6$, then $\hat{\mk g}$ must have type ${\rm C}_2$, ${\rm C}_3$. For the long simple root of ${\rm C}_\ell$ we have $\omega_\alpha(h_0)=\ell^2$. Hence ${\rm C}_3$ is excluded, and we are left with ${\rm C}_2$, where we have indeed $\omega_{2}(h_0)-2(1-n_{\alpha_1,\alpha_2})=4-6=-2$.

If $n_{\beta,\alpha}=-1$, then we have the inequality $\omega_\alpha(h_0)<4$. Proposition \ref{Prop ValuesOmegah0} implies that this occurs exactly when $\hat{\mk g}$ has type ${\rm A}_1$ or ${\rm A}_2$.

With this our claim about the inequality (\ref{For omegaalpha vs 1-nalphabeta}) is proved and the proof of the lemma is complete.
\end{proof}

\subsection{The Kirwan stratification and the $S$-ample cone}

For this subsection, we fix $\lambda\in\Lambda^{++}$, the associated ample line bundle ${\mc L}_\lambda$ on $X$, the projective embedding $\varphi:X\subset\PP=\PP(V_\lambda)$ and the resulting momentum map $\mu=\mu_{K_S}$ defined in (\ref{For MomentumRepKS}). We have a $K_S$ equivariant function
$$
||\mu||^2 : X\to \RR\;,
$$
This function defines an $S$-invariant Morse-type stratification of $X$, described in the projective setting by Ness and in a more general symplectic setting by Kirwan, \cite{Kirwan}, \cite{Ness-StratNullcone}. The strata are parametrized by the critical values of $||\mu||^2$. The unstable locus consists of the strata arising from nonzero critical values. In our case $K_S\cong SU_2$, which acts transitively on the sphere in its coadjoint representation. Hence the nonzero critical $K_S$-orbits are exactly the orbits through the points of vanishing of the vector field induced by our fixed element $h_0\in i\mk k_S$. Since $h_0$ is regular, $X^{h_0}=X^H=\{x_w; w\in W\}$. The corresponding critical values are
$$
||\mu(x_w)||^2 = w\lambda(h_0)^2 \;.
$$
According to Kirwan's theorem the strata are parametrized by the critical points for which the intermediate values $\mu(x_w)\in i\mk t_S^*$ are dominant, which in our case means positive.
We are lead to consider the following partition of $W$, defined for the (any) dominant weight $\lambda$
\begin{gather}\label{For Partition Wpm0lambda}
\begin{array}{l}
W=W^+(\lambda)\sqcup W^-(\lambda)\sqcup W^0(\lambda) \\
\;\\
W^{+}(\lambda)=\{w\in W: w\lambda(h_0)>0 \} \\
W^{0}(\lambda)=\{w\in W: w\lambda(h_0)=0 \} \\
W^{-}(\lambda)=\{w\in W: w\lambda(h_0)<0 \} \;.\\
\end{array}
\end{gather}
By Lemma \ref{Lemma sigma=w0} and Lemma \ref{Lemma sigma=w0 on XH}, we have $w_0w\lambda(h_0)=-w\lambda(h_0)$ and $x_{w_0w}\in K_Sx_w$, hence
$$
w_0W^\pm(\lambda)=W^\mp(\lambda) \quad,\quad w_0W^0(\lambda)=W^0(\lambda) \;.
$$

\begin{lemma}\label{Lemma KirwanPrestrataBw}

\begin{enumerate}
\item[{\rm (i)}] The critical values of $||\mu||^2$ on $X$ are $w\lambda(h_0)^2$, for $w\in W^+(\lambda)$, and $0$, whenever it is attained. The unstable connected critical sets are exactly the $K_S$-orbits through the points $x_w$ for $w\in W^+(\lambda)$.
\item[{\rm (ii)}] For every $w\in W$ we have $Bx_w = \{x\in X : \lim\limits_{t\rightarrow-\infty}{\rm exp}(th_0)x=x_w\}$.
\item[{\rm (iii)}] The Schubert variety $\ol{Bx_w}$ is contained in $X_{us}(\lambda)$ if and only if either $w\lambda(h_0)>0$ or $X_{ss}(\lambda)=\emptyset$.
\end{enumerate}
\end{lemma}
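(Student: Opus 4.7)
The plan is to address the three parts mostly independently.

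For part (i), I would invoke the standard Ness-Kirwan characterization: a point $x\in X$ is critical for $||\mu||^2$ if and only if the fundamental vector field of $\mu(x)$, viewed in $i\mk k_S$ via the Killing form, vanishes at $x$. Since $K_S\cong SU_2$ acts transitively on spheres in its coadjoint representation, every nonzero critical $K_S$-orbit has a representative $x$ with $\mu(x)$ on the positive ray spanned by the dual of $h_0$; such $x$ is fixed by $H_S=\exp(\CC h_0)$, and by regularity of $h_0$ lies in $X^{H_S}=X^H$. Writing $x=x_w$, the component $\mu^{h_0}(x_w)=w\lambda(h_0)$ must be positive, i.e.\ $w\in W^+(\lambda)$, with critical value $(w\lambda(h_0))^2$; the value $0$ is a critical value precisely when $W^0(\lambda)\ne\emptyset$.

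For part (ii), I would use the Bia\l ynicki-Birula decomposition for the $H_S$-action on $X$. The attracting cell of $x_w$ under $\exp(th_0)$ as $t\to-\infty$ is a smooth locally closed subvariety whose tangent space at $x_w$ is the sum of positive $h_0$-eigenspaces of $T_{x_w}X$. The $H$-weights on $T_{x_w}X$ are $w\Delta^-$, and since $h_0$ is dominant regular (so $\gamma(h_0)>0\Leftrightarrow\gamma\in\Delta^+$), these positive eigenspaces correspond exactly to $w\Delta^-\cap\Delta^+=\Phi_{w^{-1}}=\Delta(N/N_{x_w})$, which is the tangent space to the Schubert cell $Bx_w$. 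A direct check using the unipotent coordinates $\prod_{\alpha\in\Phi_{w^{-1}}}\exp(c_\alpha e_\alpha)\cdot x_w$ on $Bx_w$ shows that $\exp(th_0)$ scales each $c_\alpha$ by $e^{t\alpha(h_0)}\to 0$ as $t\to-\infty$, so $Bx_w$ is contained in the Bia\l ynicki-Birula cell of $x_w$; since both families of cells partition $X$, the inclusion forces equality.

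For part (iii), the implication $(\Leftarrow)$ is the easier one: if $X_{ss}(\lambda)=\emptyset$ the statement is trivial, and if $w\lambda(h_0)>0$ then every $x=[v]\in\ol{Bx_w}$ has $v$ lying in the Demazure submodule $V_{B,w\lambda}\subset V_\lambda$, whose weights lie in $w\lambda+Q_+$ and take $h_0$-value at least $w\lambda(h_0)>0$, so Mumford's numerical criterion applied to the 1-PS $t\mapsto t^{h_0}$ destabilizes $x$. For the converse I would argue the contrapositive: assume $w\lambda(h_0)\leq 0$ and $X_{ss}(\lambda)\ne\emptyset$, and exhibit a semistable point in $\ol{Bx_w}$. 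The case $w\lambda(h_0)=0$ is handled by $x_w$ itself, since then $\mu(x_w)=0$ and Kempf-Ness gives $x_w\in X_{ss}(\lambda)$.

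The main obstacle is the case $w\lambda(h_0)<0$, which I would handle by a connectedness argument. The continuous function $\mu^{h_0}$ on the connected projective variety $\ol{Bx_w}$ attains the value $\lambda(h_0)>0$ at the Bruhat minimum $x_1\in\ol{Bx_w}$ and the value $w\lambda(h_0)<0$ at $x_w$, so it vanishes at some $x\in\ol{Bx_w}$. Such an $x$ is $H_S$-semistable because $0$ lies in the convex hull of its $h_0$-weights, and if additionally $\dim Sx=3$ then Lemma \ref{Lemma Hilb-Mumf for S} upgrades this to $S$-semistability. Securing a three-dimensional orbit is the delicate point: I would combine Proposition \ref{Prop S-Orbits X}, according to which the lower-dimensional orbits form a proper closed subvariety of $X$, with Lemma \ref{Lemma LengthEstimates}, which forces $l(w)\geq 3$ whenever $w\lambda(h_0)<0$ provided $G$ has no simple factor of type ${\rm A}_1$, ${\rm A}_2$, or ${\rm C}_2$. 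A dimension count on $\{\mu^{h_0}=0\}\cap\ol{Bx_w}$ then produces a generic $x$ with three-dimensional orbit, while the small-rank exceptions are handled separately alongside the other such cases in Section \ref{Sect Except}.
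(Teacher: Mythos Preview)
Your treatment of (i), (ii), and the forward implication in (iii) agrees with the paper's. For the converse in (iii) with $w\lambda(h_0)<0$, the paper follows the same overall strategy---produce an $H_S$-semistable point in $\ol{Bx_w}\cap X_S^3$ and apply Lemma~\ref{Lemma Hilb-Mumf for S}---but more efficiently: both $X_{ss,H_S}(\lambda)$ and $X_S^3$ are \emph{open} in $X$, and each meets the irreducible cell $Bx_w$ (the first because the $h_0$-weights on $\ol{Bx_w}$ range from $w\lambda(h_0)<0$ to $\lambda(h_0)>0$, the second whenever $Bx_w\cap X_S^3\ne\emptyset$), so their common intersection with $Bx_w$ is automatically nonempty. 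This avoids your real-analytic dimension count on the level set $\{\mu^{h_0}=0\}$.

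The substantive gap is in your handling of the residual low-length cases. You defer all $G$ with an ${\rm A}_1$, ${\rm A}_2$, or ${\rm C}_2$ factor to Section~\ref{Sect Except}, but that section does not prove the present lemma; it computes unstable loci \emph{using} Theorem~\ref{Theo KirwanStratXusSlambda}, which in turn rests on this lemma, so the deferral is circular. Moreover, the relevant obstruction is not $l(w)\le 2$ but the stronger condition $Bx_w\cap X_S^3=\emptyset$, and the paper shows this occurs only for $w=s_\alpha$ with $\alpha$ the simple root of an ${\rm A}_1$ factor, or for $G$ of type ${\rm A}_1\times{\rm A}_1$; in types ${\rm A}_2$ and ${\rm C}_2$ the length-$2$ Schubert cells already meet $X_S^3$ and the main argument goes through. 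The two genuine exceptions are disposed of inside the proof: in the first, Lemma~\ref{Lemma LengthEstimates}(ii) gives $X_{ss}(\lambda)=\emptyset$ outright; in the second, $X_{ss}(\lambda)\ne\emptyset$ forces $s_\alpha\lambda(h_0)=s_\beta\lambda(h_0)=0$, so the $H$-fixed points in $\ol{Bx_w}$ are themselves semistable.
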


\begin{proof}
Part (i) is a summary of the paragraph preceding the lemma. For part (ii), recall from Section \ref{Sect Flags} that the set of $H$-weights of the tangent space $T_{x_w}Bx_w$ is exactly the inversion set $\Phi_{w^{-1}}=\Delta^+\cap w\Delta^-$. Since the inversion set is closed under root addition, it gives rise to a subgroup $N^w$ contained the unipotent radical $N$ of $B$. The subgroup $N^w$ acts simply transitively on the Schubert cell, $Bx_w=N^wx_w$. The inversion set is also the set of the positive eigenvalues of $h_0$ in $T_{x_w}X$. The $H_S$-action on the Schubert cell $N^wx_w$ can be linearized using the fact that the exponential map of $G$ restricted to $\mk n$ is biholomorphic. Thus for $x\in Bx_w$, we have $\lim_{t\rightarrow-\infty}{\rm exp}(th_0)x=x_w$. Since the Schubert cells constitute a cell decomposition of $X$ and each cell adheres to its own $H_S$-fixed point, we can deduce part (ii).

For part (iii), let us notice that the Schubert cell $Bx_w$ and the Schubert variety $\ol{Bx_w}$ are preserved by the $H$-action. The $H$-fixed points in the Schubert variety are given by the Bruhat order $\ol{Bx_w}^H=\ol{Bx_w}^{H_S}=\{x_{w'};w'\preceq w\}$. From Lemma \ref{Lemma LengthEstimates} we get $w'\lambda(h_0)\geq w\lambda(h_0)$ for every $w'$ such that $x_{w'}\in\ol{Bx_w}$ with strict inequality whenever $w'\ne w$. Since $\lambda(h_0)>0$, we have that $0\in{\rm Conv}(\mu(\ol{Bx_w}^H))$ if and only if $w\lambda(h_0)\leq 0$. We may now deduce that $Bx_w\subset X_{us}(\lambda)$ if $w\lambda(h_0)>0$. We have to show that $w\lambda(h_0)\leq 0$ implies that the Schubert cell contains semistable points. If $w\lambda(h_0)=0$, then the point $x_w$ is semistable. Suppose now that $w\lambda(h_0)<0$ and $w'\lambda(h_0)\ne 0$ for all $w'\prec w$. Then $0\in{\rm Conv}(\mu(\ol{Bx_w}^H))$ and hence $Bx_w$ contains $H_S$-semistable points. If $Bx_w\cap X_S^3\ne \emptyset$, then from Lemma \ref{Lemma Hilb-Mumf for S}
 we see that the $H_S$-semistable points in that intersection are also $S$-semistable. Note that $Bx_w\cap X_S^3\ne \emptyset$ whenever $l(w)\geq 3$. Thus it remains to consider the situation $Bx_w\cap X_S^3= \emptyset$. Then we have $w\lambda(h_0)<0$, $l(w)\leq 2$ and $\ol{Bx_w}\subset \bigcup_{u\in W} Sx_u$. These conditions occur only in two cases: case 1) $w=s_\alpha$ with $\alpha$ being the simple root of a factor of $G$ of type ${\rm A}_1$; however from Lemma \ref{Lemma LengthEstimates} we know that $s_\alpha\lambda(h_0)<0$ implies $X_{ss}(\lambda)=\emptyset$; case 2) $w=s_\alpha s_\beta$ with $\alpha$ and $\beta$ being the simple roots of $G$ and $G$ being of type ${\rm A}_1\times{\rm A}_1$; in this case we have $X_{ss}(\lambda)\ne \emptyset$ if and only if $s_\alpha\lambda(h_0)=s_\beta\lambda(h_0)=0$, so the Schubert cell contains semistable points whenever they exist.
\end{proof}

\begin{theorem}\label{Theo KirwanStratXusSlambda}
The Kirwan strata of the $S$-unstable locus in $X$ with respect to ${\mc L}_\lambda$ are the $S$-saturations of Schubert cells $SBx_w$, for $w\in W^+(\lambda)$, with the Schubert cell $Bx_w$ being the prestratum and $K_Sx_w$ being the critical set of $||\mu||^2$. Thus
\begin{gather}\label{For KirwanStratXSlambda}
X_{us}(\lambda) = \bigcup\limits_{w\in W^+(\lambda)} {\mc S}_{w\lambda} \;,\quad {\mc S}_{w\lambda}=SBx_w \;.
\end{gather}
The dimension of the strata is given by $\dim S_{w\lambda}=l(w)+1$, and consequently
$$
\dim X_{us}(\lambda)=1+\max\{l(w):w\in W^+(\lambda)\} \quad,\quad {\rm codim}_X X_{us}(\lambda) = -1+\min\{l(w):w\in W^-(\lambda)\} \;.
$$
\end{theorem}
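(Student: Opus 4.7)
My plan is to deduce the theorem from Kirwan-Ness stratification theory together with Lemma \ref{Lemma KirwanPrestrataBw}. In Kirwan's setup, the Morse-Bott function $||\mu||^2$ induces a $K_S$-invariant stratification of $X$; each nonzero critical value $\beta$ gives rise to a stratum which, in the projective K\"ahler setting, equals the $S$-saturation (equivalently the $K_S$-saturation, by complexification) of the Bialynicki-Birula cell attracted, under the flow of $\exp(th_0)$ as $t\to-\infty$, to the corresponding connected critical component. Lemma \ref{Lemma KirwanPrestrataBw}(i) identifies those critical components as the orbits $K_S x_w$, $w\in W^+(\lambda)$, and part (ii) identifies the Bialynicki-Birula cell through $x_w$ with the Schubert cell $Bx_w$. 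Combining these gives $\mathcal{S}_{w\lambda}=SBx_w$, which is the content of (\ref{For KirwanStratXSlambda}).

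For the dimension formula I would compute $T_{x_w}\mathcal{S}_{w\lambda}$ directly. By Lemma \ref{Lemma KirwanPrestrataBw}(ii), $T_{x_w}(Bx_w)$ coincides with the positive $h_0$-eigenspace in $T_{x_w}X$, of complex dimension $l(w)$. The tangent space to the orbit $Sx_w$ at $x_w$ is the image of $\mathfrak{s}\to T_{x_w}X$; the root vectors $e_{\pm}$ have $h_0$-eigenvalues $\pm 2$, so the image of $e_+$ already lies in $T_{x_w}(Bx_w)$, while the image of $e_-$ lies in the negative $h_0$-eigenspace and is therefore transverse to $T_{x_w}(Bx_w)$. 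Hence
\[
T_{x_w}\mathcal{S}_{w\lambda}=T_{x_w}(Bx_w)\oplus\mathbb{C}(e_-)_{x_w},
\]
which yields $\dim\mathcal{S}_{w\lambda}=l(w)+1$. The edge case $w=1$ reduces to the principal curve $C=Sx_1$ of dimension $1$, while $w_0\notin W^+(\lambda)$ by Lemma \ref{Lemma sigma=w0}, so no correction is needed there.

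The dimension of $X_{us}(\lambda)$ is then the maximum of $l(w)+1$ over $w\in W^+(\lambda)$. For the codimension, I would use the involution $w\mapsto w_0w$, which by Lemmas \ref{Lemma sigma=w0} and \ref{Lemma sigma=w0 on XH} bijects $W^+(\lambda)$ onto $W^-(\lambda)$ and satisfies $l(w_0w)=|\Delta^+|-l(w)=\dim X-l(w)$. Subtracting the maximum from $\dim X$ then converts it into the minimum over $W^-(\lambda)$ and produces the stated codimension.

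The main technical point I expect to negotiate is the reconciliation of Kirwan's Morse-theoretic prestratum with the algebraic Bialynicki-Birula cell, and the verification that the strata are cleanly indexed by $W^+(\lambda)$. For the former one invokes the standard fact that in the projective K\"ahler setting the negative $||\mu||^2$-gradient flow along a rank-one direction agrees with the $\exp(-th_0)$-flow up to reparametrization; for the latter one uses the regularity of $h_0$, which makes $X^{h_0}=X^H$ discrete (Lemma \ref{Lemma sigma=w0 on XH}), so each $K_Sx_w$ is an isolated connected critical component, and distinct $w,w'\in W^+(\lambda)$ produce disjoint strata $SBx_w$ even when they share the critical value $w\lambda(h_0)=w'\lambda(h_0)$.
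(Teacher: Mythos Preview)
Your proposal is correct and follows essentially the same route as the paper: both invoke Kirwan's theorem, use Lemma \ref{Lemma KirwanPrestrataBw}(i) to identify the critical sets $K_Sx_w$, use part (ii) to identify the prestratum with the Schubert cell $Bx_w$, and compute the stratum dimension via the transversality of $\CC e_-\cdot x_w$ to $T_{x_w}(Bx_w)$. The paper additionally cites part (iii) of that lemma to confirm that $Bx_w$ really is the full prestratum (i.e., that nothing outside $Bx_w$ flows to $x_w$ while remaining unstable), a point you handle instead by appealing to the identification of the Kirwan flow with the $\exp(-th_0)$-flow; either way the content is the same. Your derivation of the codimension formula via the involution $w\mapsto w_0w$ and $l(w_0w)=\dim X-l(w)$ is a detail the paper leaves implicit, so your write-up is in fact slightly more complete on that point.
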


\begin{proof}
With the preceding lemma in view, the theorem is deduced by a direct application of Kirwan's theorem. From part (i) of the lemma we know that the nonzero critical values of $||\mu||^2$ are $w\lambda(h_0)^2$, for $w\in W^+(\lambda)$, with critical set $K_Sx_w$. Let $\mc S_{w\lambda}$ denote the Kirwan stratum containing $x_w$. The fact that $Bx_w$ is the prestratum follows from parts (ii) and (iii) of Lemma \ref{Lemma KirwanPrestrataBw}. This implies $\mc S_{w\lambda}=SBx_w$ and proves formula (\ref{For KirwanStratXSlambda}).

The dimension formula $\dim S_{w\lambda}=l(w)+1$ follows from $\dim Bx_w=l(w)$, $S\cap B=B_S$ and the fact that the tangent line $\CC e_-\cdot x_w\cong {\mk s}/{\mk b}_S$ is transversal to $T_{x_w}(Bx_w)$.
\end{proof}

The above theorem allows us, in particular, to detect the cases when the semistable locus is nonempty. Thus we can determine the $S$-ample cone $C^S(X)$, which, as we already mentioned is identified with the nulleigencone ${\rm Cone}({\mc E}_0(S\subset G))\subset\Lambda_\RR^+$. This description is already known from the work of Berenstein and Sjamaar, \cite{Beren-Sjam}. We formulate it below.

\begin{coro}\label{Coro KirwanImpliesEigencone}
The $S$-ample cone on $X$ is defines by the following inequalities:
$$
C^S(X)\cong{\rm Cone}({\mc E}_0)=\{\lambda\in\Lambda_\RR^+ : \lambda(s_\alpha h_0)\geq 0 \quad\textrm{for all}\quad \alpha\in\Pi \quad\textrm{with}\quad \alpha\perp\Pi\setminus\{\alpha\} \}\;.
$$
In particular, if $G$ has no simple factors of rank 1, then all ample line bundles on $X$ are $S$-ample, i.e.
$$
C^S(X)\cong{\rm Cone}({\mc E}_0)=\Lambda_\RR^+ \;.
$$
\end{coro}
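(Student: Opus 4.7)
My plan is to deduce the corollary by combining the codimension formula from Theorem \ref{Theo KirwanStratXusSlambda} with the factorization argument embedded in Lemma \ref{Lemma LengthEstimates}(ii), carrying out the main computation on strictly dominant weights and then extending to $\Lambda_\RR^+$ by closedness.

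First, for $\lambda\in\Lambda^{++}$ I would convert the membership in ${\rm Cone}({\mc E}_0)$ into a statement about the $S$-unstable locus. Since $X_{ss}(k\lambda)=X_{ss}(\lambda)$ and ${\rm Cone}({\mc E}_0)$ is rational polyhedral, $\lambda\in{\rm Cone}({\mc E}_0)$ is equivalent to $V_{k\lambda}^S\ne 0$ for some $k\in\NN$, hence to $X_{ss}(\lambda)\ne\emptyset$. As $X$ is irreducible and $X_{us}(\lambda)$ is closed, this in turn is equivalent to ${\rm codim}_X X_{us}(\lambda)\ge 1$. Substituting the codimension formula of Theorem \ref{Theo KirwanStratXusSlambda} translates this into $\min\{l(w):w\in W^-(\lambda)\}\ge 2$, i.e., no simple reflection $s_\alpha$ lies in $W^-(\lambda)$, equivalently $s_\alpha\lambda(h_0)\ge 0$ for every $\alpha\in\Pi$.

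Next I would prune this system of inequalities using Lemma \ref{Lemma LengthEstimates}(ii): whenever $\alpha$ is not orthogonal to $\Pi\setminus\{\alpha\}$ the inequality $s_\alpha\lambda(h_0)>0$ is automatic (for $\lambda\in\Lambda^{++}$), so only the simple roots $\alpha$ with $\alpha\perp\Pi\setminus\{\alpha\}$ (those sitting in a rank-$1$ simple factor of $G$) yield non-trivial constraints. Rewriting $s_\alpha\lambda(h_0)=\lambda(s_\alpha h_0)$ (using $s_\alpha^2=1$), this is precisely the system of inequalities in the statement, now verified on $\Lambda^{++}$.

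Finally, I would extend the equality from $\Lambda^{++}$ to all of $\Lambda_\RR^+$: both ${\rm Cone}({\mc E}_0)$ (rational polyhedral by the Brion--Knop theorem recalled in the text) and the inequality locus are closed convex cones in $\Lambda_\RR^+$ agreeing on the full set of strictly dominant integer points, so they agree on the closure of $\RR_{>0}\cdot\Lambda^{++}=\Lambda_\RR^{++}$, which is $\Lambda_\RR^+$. The ``in particular'' statement is then immediate: if $G$ has no rank-$1$ simple factors, no simple root is orthogonal to all the others, so the inequality system is vacuous. The step that deserves the most care is the necessity direction (showing the inequalities must hold on ${\rm Cone}({\mc E}_0)$), but this is already contained in the tensor-product factorization used inside the proof of Lemma \ref{Lemma LengthEstimates}(ii), which directly gives $V_\lambda^S=0$ whenever $s_\alpha\lambda(h_0)<0$; thus no new work is required, and the corollary follows.
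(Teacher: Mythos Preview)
Your proposal is correct and follows essentially the same route as the paper: both arguments use Theorem \ref{Theo KirwanStratXusSlambda} to translate $X_{ss}(\lambda)\ne\emptyset$ into the condition that no simple reflection lies in $W^-(\lambda)$, and then invoke Lemma \ref{Lemma LengthEstimates}(ii) to see that only simple roots of ${\rm A}_1$-factors can violate this. Your treatment is slightly more explicit about the passage from $\Lambda^{++}$ to $\Lambda_\RR^+$ and about why the necessity direction needs no additional input, but the substance is identical to the paper's proof.
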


\begin{proof}
Theorem \ref{Theo KirwanStratXusSlambda} implies that $X=X_{us}(\lambda)$ if and only if there exists a simple root $\alpha\in\Pi$ such that $s_\alpha\lambda(h_0)<0$. In Lemma \ref{Lemma LengthEstimates} we have shown that this inequality holds if and only if $\alpha\perp\Pi\setminus\{\alpha\}$ and upon writing $\lambda=\sum_{\beta\in\Pi} \lambda_\beta \omega_\beta$ in the basis of fundamental weights, we have
$$
\lambda_\alpha > \sum\limits_{\beta\in\Pi\setminus\{\alpha\}} \lambda_\beta\omega_\beta(h_0)\;.
$$
This proves the corollary.
\end{proof}

We record the following geometric formulation of the above corollary.

\begin{coro}\label{Coro SchubertDivisorsAreStable}
If ${\mc L}_\lambda$ is an ample and $S$-ample line bundle on $X$, i.e., $\lambda\in\Lambda^{++}\cap{\rm Cone}({\mc E}_0)$, then all Schubert divisors intersect the $S$-semistable locus $X_{ss}(\lambda)$. If furthermore $\lambda$ belongs to the interior of the $S$-ample cone, i.e., $\lambda\in\Lambda^{++}\cap{\rm Int}\;{\rm Cone}({\mc E}_0)$, then all Schubert divisors intersect the $S$-stable locus.
\end{coro}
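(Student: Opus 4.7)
The Schubert divisors of $X=G/B$ are exactly the varieties $D_\alpha=\ol{Bx_{w_0s_\alpha}}$ for $\alpha\in\Pi$, since these are the Schubert varieties of codimension one. For the first (semistable) statement, the plan is to apply Lemma \ref{Lemma KirwanPrestrataBw}(iii): since $\lambda$ is $S$-ample, $X_{ss}(\lambda)\ne\emptyset$, and the criterion there makes $D_\alpha\subset X_{us}(\lambda)$ equivalent to $w_0s_\alpha\lambda(h_0)>0$. I would compute $w_0s_\alpha\lambda(h_0)=-s_\alpha\lambda(h_0)$ via Lemma \ref{Lemma sigma=w0}, and then invoke the contrapositive of Lemma \ref{Lemma LengthEstimates}(ii) to conclude $s_\alpha\lambda(h_0)\ge 0$ from $\lambda\in{\rm Cone}({\mc E}_0)$. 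This gives $w_0s_\alpha\lambda(h_0)\le 0$, so $D_\alpha$ meets $X_{ss}(\lambda)$.

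For the stable statement, the first step is to upgrade this to the strict inequality $s_\alpha\lambda(h_0)>0$ for every $\alpha\in\Pi$: this is automatic from Lemma \ref{Lemma LengthEstimates}(ii) combined with strict dominance $\lambda\in\Lambda^{++}$ when $\alpha$ lies in a factor of rank at least $2$, and it coincides with the strict defining inequality of the interior of the $S$-ample cone when $\alpha$ lies in a rank-$1$ factor (cf.\ Corollary \ref{Coro KirwanImpliesEigencone}). I would then exhibit a stable point in $D_\alpha$ as follows. Let $B^-x_1$ denote the big Bruhat cell, i.e., the attracting set of $x_1$ for the $h_0$-flow as $t\to+\infty$. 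Since $1\preceq w_0s_\alpha$ in the Bruhat order, $x_1\in\ol{Bx_{w_0s_\alpha}}$, and since $B^-x_1$ is open dense in $X$, the set $U:=Bx_{w_0s_\alpha}\cap B^-x_1$ is open dense in the Schubert cell and in particular non-empty. Proposition \ref{Prop S-Orbits X} then guarantees that a generic $x\in U$ satisfies $\dim Sx=3$. For such $x$, Lemma \ref{Lemma KirwanPrestrataBw}(ii) identifies the two $H_S$-fixed points of $\ol{H_Sx}\cong\PP^1$ as the source $x_{w_0s_\alpha}$ and the sink $x_1$, and Lemma \ref{Lemma Hilb-Mumf for S} applied with $w_1=w_0s_\alpha$, $w_2=1$ yields
$$
w_1\lambda(h_0)=-s_\alpha\lambda(h_0)<0<\lambda(h_0)=w_2\lambda(h_0),
$$
so the two critical Hilbert--Mumford values have strictly opposite signs.

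The main obstacle is the passage from strictly opposite Hilbert--Mumford weights to honest $S$-stability: Lemma \ref{Lemma Hilb-Mumf for S} is stated only for (in)stability, whereas stability additionally demands closedness of $Sx$ in $X_{ss}(\lambda)$ (finiteness of the stabilizer is automatic from Proposition \ref{Prop S-Orbits X}(iii) for $3$-dimensional orbits). I would handle this via the standard rank-$1$ reduction behind Lemma \ref{Lemma Hilb-Mumf for S}: every non-trivial one-parameter subgroup of $S$ is $S$-conjugate to $H_S$, so the limit of $x$ along any one-parameter subgroup is $S$-conjugate to one of the two endpoints $x_{w_0s_\alpha}$, $x_1$ of $\ol{H_Sx}$; both of these lie in $X_{us}(\lambda)$ under the strict-interior hypothesis, so no one-parameter subgroup degenerates $x$ to a semistable point outside $Sx$, and we conclude $x\in X_s(\lambda)\cap D_\alpha$.
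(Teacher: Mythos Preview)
Your treatment of the semistable claim is correct and matches the paper's intent in calling this corollary a ``geometric formulation'' of Corollary~\ref{Coro KirwanImpliesEigencone}: by Lemma~\ref{Lemma KirwanPrestrataBw}(iii), the Schubert divisor $D_\alpha=\ol{Bx_{w_0s_\alpha}}$ is contained in $X_{us}(\lambda)$ exactly when $w_0s_\alpha\lambda(h_0)=-s_\alpha\lambda(h_0)>0$, and $S$-ampleness rules this out via Lemma~\ref{Lemma LengthEstimates}(ii).

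The stable claim, however, has a genuine gap. The assertion that ``the limit of $x$ along any one-parameter subgroup is $S$-conjugate to one of the two endpoints $x_{w_0s_\alpha},x_1$'' is false. For $\sigma=gH_Sg^{-1}$ with $g\in S$ one has
\[
\lim_{t\to-\infty}\sigma(t)\,x \;=\; g\cdot\Bigl(\lim_{t\to-\infty}e^{th_0}(g^{-1}x)\Bigr)\;=\;g\cdot x_{w'} \;,
\]
where $w'$ is determined by the Schubert cell containing $g^{-1}x$, not the one containing $x$. As $g$ ranges over $S$, the translate $g^{-1}x$ sweeps out the whole $3$-dimensional orbit $Sx$, and nothing in your construction prevents some translate from landing in a cell $Bx_{w}$ with $w\in W^0(\lambda)$. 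In that case the corresponding one-parameter limit lies in the \emph{semistable} orbit $Sx_w$, so $Sx$ is not closed in $X_{ss}(\lambda)$ and your chosen $x$ fails to be stable. In short, conjugating the one-parameter subgroup by $g$ does not conjugate the limit of the \emph{same} $x$; it computes the $H_S$-limit of $g^{-1}x$ instead.

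One way to close the gap is to argue globally rather than pointwise. The correct Hilbert--Mumford reduction shows that $X\setminus X_s(\lambda)\subset\bigcup_{w\in W^+(\lambda)\cup W^0(\lambda)}\ol{SBx_w}$, and your strict inequalities $s_\alpha\lambda(h_0)>0$ force every Weyl group element of length $\geq\dim X-1$ into $W^-(\lambda)$; hence $\dim(X\setminus X_s(\lambda))\le\dim X-1$. Since $X\setminus X_s(\lambda)$ is closed and $S$-invariant, each of its irreducible components is $S$-invariant. But $D_\alpha$ is the zero locus of the $B$-eigensection in $H^0(X,\mc L_{\omega_\alpha})\cong V_{\omega_\alpha}^*$, whose $H_S$-weight equals $\omega_\alpha(h_0)>0$; hence $D_\alpha$ is not $S$-invariant and cannot be such a component. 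It follows that $D_\alpha\not\subset X\setminus X_s(\lambda)$, i.e., $D_\alpha\cap X_s(\lambda)\ne\emptyset$.
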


\subsection{GIT-classes of ample line bundles}

The description of the unstable loci of $S$-ample line bundles on $X$ given in Theorem \ref{Theo KirwanStratXusSlambda} allows us to determine the GIT-equivalence classes of line bundles according to the definitions given at the end of Section \ref{Sect Flags}. In particular, we determine the chambers, in the sense of Definition \ref{Def Chambers}. Recall the partition $W=W^+(\lambda)\sqcup W^-(\lambda)\sqcup W^0(\lambda)$ of the Weyl group defined in (\ref{For Partition Wpm0lambda}).

\begin{lemma}\label{Lemma DifferentChambers}
The following hold:
\begin{enumerate}
\item[\rm(i)] The partition (\ref{For Partition Wpm0lambda}) depends only on the GIT-class of $\lambda$, say ${\mc C}$. Different GIT-classes correspond to different partitions. We denote $W^{0\pm}(\lambda)=W^{0\pm}(\mc C)$.

\item[\rm (ii)] For every GIT-class ${\mc C}$ and every $\nu\in{\rm Cone}({\mc E}_0)$, the following are equivalent:
\begin{align*}
{\rm (a)} \;&\; \nu\in\ol{\mc C} \;;\\
{\rm (b)} \;&\; X_{ss}(\mc C)\subset X_{ss}(\nu) \;;\\
{\rm (c)} \;&\; W^+(\nu)\subset W^+(\mc C) \;.
\end{align*}
\item[\rm (iii)] The 2-dimensional $S$ orbits in $X_{ss}(\mc C)$ are exactly the orbits $Sx_w$ with $w\in W^0(\mc C)$.
\item[\rm (iv)] Suppose $\dim X\geq 3$. A GIT-class ${\mc C}$ is a chamber if and only if $W^0(\mc C)=\emptyset$.
\end{enumerate}
\end{lemma}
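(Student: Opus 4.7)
The unifying observation is that, by Theorem \ref{Theo KirwanStratXusSlambda}, the Kirwan stratification $X_{us}(\lambda) = \bigcup_{w \in W^+(\lambda)} SBx_w$ makes the signed partition $W = W^+(\lambda) \sqcup W^-(\lambda) \sqcup W^0(\lambda)$ a complete invariant of the GIT-class. The plan is to prove (i) first, then deduce (iii) and (iv) from the orbit structure, and finally (ii) by combining the stratification with polyhedral convex geometry on $\Lambda_\RR$.

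For (i), the map $\mc C \mapsto (W^+(\mc C), W^-(\mc C), W^0(\mc C))$ is well defined because $W^+$ determines $X_{us}$ via Theorem \ref{Theo KirwanStratXusSlambda}. Injectivity is the converse: the partition can be read off from $X_{ss}(\mc C)$. One has $W^0(\mc C) = \{w\in W : x_w\in X_{ss}(\mc C)\}$ directly, since $x_w$ is $H_S$-semistable exactly when $\mu(x_w) = w\lambda(h_0) = 0$. To separate $W^+(\mc C)$ from $W^-(\mc C)$ inside their union, I would invoke Lemma \ref{Lemma KirwanPrestrataBw}(iii): because $\mc C$ is a class of $S$-ample bundles, $X_{ss}(\mc C)\neq\emptyset$, so $w \in W^+(\mc C)$ iff $\ol{Bx_w} \subset X_{us}(\mc C)$.

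Parts (iii) and (iv) follow quickly. For (iii), Proposition \ref{Prop S-Orbits X}(ii) lists the 2-dimensional $S$-orbits as $Sx_w$ for $w\in W\setminus\{1,w_0\}$; since $X_{ss}(\mc C)$ is $S$-invariant, $Sx_w\subset X_{ss}(\mc C)$ iff $x_w\in X_{ss}(\mc C)$, i.e., iff $w\in W^0(\mc C)$, and the cases $w=1$ and $w=w_0$ are excluded because $\lambda(h_0)>0>w_0\lambda(h_0)$ for strictly dominant $\lambda$. For (iv), stability of $x\in X_{ss}(\mc C)$ fails only when $S_x$ is infinite ($\dim Sx<3$ by Proposition \ref{Prop S-Orbits X}(iii)) or when $Sx$ is not closed in $X_{ss}(\mc C)$. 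The unique 1-dimensional orbit $Sx_1=Sx_{w_0}$ lies in $X_{us}(\mc C)$ since $1\in W^+(\mc C)$, so infinite isotropy can only come from 2-dimensional orbits, which by (iii) meet $X_{ss}(\mc C)$ exactly when $W^0(\mc C)\neq\emptyset$. When $W^0(\mc C)=\emptyset$ only 3-dimensional orbits survive in $X_{ss}(\mc C)$; by Proposition \ref{Prop S-Orbits X}(iv) the closure of any 3-dimensional orbit picks up only orbits of strictly smaller dimension, none of which lie in $X_{ss}(\mc C)$, so each such orbit is automatically closed there.

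Part (ii) is the technical heart. For (b) $\Leftrightarrow$ (c): the implication (c) $\Rightarrow$ (b) is immediate from the stratification formula. For the converse, a candidate $w\in W^+(\nu)$ gives $x_w\in X_{us}(\nu)\subset X_{us}(\mc C)$, which forbids $w\in W^0(\mc C)$; and it forbids $w\in W^-(\mc C)$ because, as established in the proof of Lemma \ref{Lemma KirwanPrestrataBw}(iii), the cell $Bx_w$ would then contain a semistable point, contradicting $Bx_w\subset SBx_w\subset X_{us}(\nu)\subset X_{us}(\mc C)$. For (c) $\Leftrightarrow$ (a), I would fix $\lambda\in\mc C$ and analyze the segment $\lambda_t = t\lambda + (1-t)\nu$ for $t\in(0,1]$. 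Using $\sigma$-symmetry (Lemma \ref{Lemma sigma=w0}), the hypothesis $W^+(\nu)\subset W^+(\mc C)$ upgrades to $W^-(\nu)\subset W^-(\mc C)$ and $W^0(\mc C)\subset W^0(\nu)$; the strict signs of $w\lambda(h_0)$ then dominate the weak signs of $w\nu(h_0)$ in each coordinate $w\lambda_t(h_0)=tw\lambda(h_0)+(1-t)w\nu(h_0)$ for $t>0$, so $\lambda_t$ has the same signed partition as $\mc C$ and, by (i), lies in $\mc C$; hence $\nu=\lim_{t\to 0^+}\lambda_t\in\ol{\mc C}$. The converse (a) $\Rightarrow$ (c) is the standard remark that strict inequalities $w\lambda_n(h_0)>0$ along an approximating sequence $\lambda_n\in\mc C$ relax in the limit to $w\nu(h_0)\geq 0$. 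The main subtlety is checking that $\lambda_t$ remains in $C^S(X) = {\rm Cone}(\mc E_0)\cap\Lambda_\RR^{++}$, which follows by convexity of both factors together with the fact that a strictly dominant plus dominant convex combination stays strictly dominant for $t>0$.
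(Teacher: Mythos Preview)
Your proof is correct and follows essentially the same approach as the paper, relying on Theorem \ref{Theo KirwanStratXusSlambda} and Proposition \ref{Prop S-Orbits X} throughout. You supply considerably more detail than the paper's very terse proof---in particular, your segment argument for (a)$\Leftrightarrow$(c) and your explicit use of Lemma \ref{Lemma KirwanPrestrataBw}(iii) to rule out $w\in W^-(\mc C)$ in the proof of (b)$\Rightarrow$(c) make explicit what the paper leaves implicit in its one-line citations of the $w_0$-symmetry and Theorem \ref{Theo KirwanStratXusSlambda}.
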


\begin{proof}
Part (i) follows from the definition of GIT-classes and Theorem \ref{Theo KirwanStratXusSlambda}. For part (ii), the equivalence between (a) and (c) follows from the property $w_0W^\pm(\lambda)=W^\mp(\lambda)$, $w_0W^0(\lambda)=W^0(\lambda)$; the equivalence between (b) and (c) follows from Theorem \ref{Theo KirwanStratXusSlambda}. For part (iii) recall from Proposition \ref{Prop S-Orbits X}, the orbits of dimension 1 and 2 are exactly $Sx_w$ for $w\in W$, and we have $Sx_w=Sx_{w_0w}$. The $H_S$-fixed point $x_w$ is semistable if and only if $\mu(x_w)=\iota^*(w\lambda)=0$ for $\lambda\in\mc C$, i.e. $w\in W^0(\mc C)$. Part (iv) follows from part (ii), since our group $S$ is 3-dimensional and ${\mc C}$ is a chamber if and only if $X_{ss}({\mc C})$ consists only of 3-dimensional $S$-orbits.
\end{proof}

Summing up the preceding results we obtain the following.

\begin{theorem}\label{Theo Chambers}
The partition of the $S$-ample cone ${\rm Cone}({\mc E}_0)$ into GIT-equivalence classes is given by the system of hyperplanes 
$$
{\mc H}_w = \{\lambda\in\Lambda_\RR \;:\; \lambda(wh_0)=0\} \quad,\quad w\in W \;.
$$
The chambers are the connected components of ${\rm Cone}({\mc E}_0)\setminus \cup_w{\mc H}_w$.
\end{theorem}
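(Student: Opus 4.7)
The plan is to deduce the theorem directly from Lemma \ref{Lemma DifferentChambers} after translating the partition $W = W^+(\lambda) \sqcup W^0(\lambda) \sqcup W^-(\lambda)$ into the geometry of the hyperplane arrangement $\{\mc H_w\}_{w\in W}$.

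First, part (i) of Lemma \ref{Lemma DifferentChambers} tells me that the GIT-class of $\lambda$ is determined by, and conversely determines, the triple $(W^+(\lambda),W^0(\lambda),W^-(\lambda))$. This triple is in turn read off from the signs of the numbers $w\lambda(h_0)$ as $w$ ranges over $W$. Rewriting $w\lambda(h_0) = \lambda(w^{-1}h_0)$ and using that $W$ is closed under inversion, the collection of linear forms $\{\lambda \mapsto w\lambda(h_0)\}_{w\in W}$ coincides with the collection $\{\lambda \mapsto \lambda(wh_0)\}_{w\in W}$, whose zero loci are precisely the hyperplanes $\mc H_w$. Thus two weights in ${\rm Cone}(\mc E_0)$ are GIT-equivalent if and only if they assign the same sign (negative, zero, or positive) to every linear functional $\lambda(wh_0)$. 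Consequently, the GIT-classes correspond exactly to the nonempty intersections of ${\rm Cone}(\mc E_0)$ with the relatively open faces of the central arrangement $\{\mc H_w\}_{w\in W}$, which is the content of the first assertion.

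For the chamber statement, I invoke part (iv) of Lemma \ref{Lemma DifferentChambers}: a GIT-class $\mc C$ is a chamber in the sense of Definition \ref{Def Chambers} if and only if $W^0(\mc C)=\emptyset$. Unwinding this condition, $W^0(\mc C)$ is empty precisely when $\lambda(wh_0)\ne 0$ for every $w\in W$ and every $\lambda\in\mc C$, i.e.\ when $\mc C$ is disjoint from every hyperplane $\mc H_w$. Combined with the first part of the proof, this singles out exactly the top-dimensional faces of the arrangement restricted to ${\rm Cone}(\mc E_0)$, which are the connected components of ${\rm Cone}(\mc E_0)\setminus\bigcup_{w\in W}\mc H_w$.

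I do not foresee a genuine obstacle: the real technical content sits in Theorem \ref{Theo KirwanStratXusSlambda} and Lemma \ref{Lemma DifferentChambers}, and the remaining step is an elementary observation about sign patterns of a finite family of linear functionals. The only point that merits a brief verification is that the relatively open faces of the arrangement are convex, hence connected, so that intersecting with the convex cone ${\rm Cone}(\mc E_0)$ yields connected GIT-classes and, in the top-dimensional case, single connected components of the open complement, with no artificial subdivision.
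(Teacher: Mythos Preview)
Your proposal is correct and follows the same approach as the paper, which simply presents Theorem \ref{Theo Chambers} as a direct consequence of the preceding results (chiefly Lemma \ref{Lemma DifferentChambers}) without writing out a separate proof. Your explicit translation between the sign partition $(W^+,W^0,W^-)$ and the faces of the arrangement $\{\mc H_w\}$, together with the convexity remark ensuring that GIT-classes are connected, spells out exactly the verification the paper leaves implicit.
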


\subsection{$S$-Movable line bundles}

Recall that a line bundle $\mc L_\lambda$ on $X$ is called $S$-movable, if ${\rm codim}_X(X_{us}(\lambda))\geq 2$. Assume $\lambda\in\Lambda^{++}\cap{\rm Int}\;{\rm Cone}({\mc E}_0)$, so that $X_{us}(\lambda)\ne X$. Corollary \ref{Coro SchubertDivisorsAreStable} tells us that $X_{us}(\lambda)$ cannot contain Schubert divisors. According to Theorem \ref{Theo KirwanStratXusSlambda}, we have ${\rm codim}_X X_{us}(\lambda)=-1+\min\{l(w):w\lambda(h_0)<0\}$. The dimension formula implies: 
\begin{center}
${\rm codim}_X X_{us}(\lambda)=1$ if and only if there exists $w\in W$ with $l(w)=2$ and $w\lambda(h_0)<0$.
\end{center}
We have estimated the signs for Weyl group elements of length 2 in Lemma \ref{Lemma LengthEstimates}. As a direct consequence of this lemma and Theorem \ref{Theo KirwanStratXusSlambda} we obtain the following.

\begin{theorem}\label{Theo codumUS2}
If $G$ has no simple factors with of type ${\rm A}_1$, ${\rm A}_2$ or ${\rm C}_2$, then for every $\lambda\in\Lambda^{++}$ the $S$-unstable locus $X_{us}(\lambda)$ has codimension at least 2 in $X$. In other words, every ample line bundle on $X$ is $S$-movable.

More generally, an $S$-ample line bundle $\mc L_\lambda$ is $S$-movable if and only if $s_\alpha s_\beta \lambda(h_0)\geq 0$ for all pairs of simple roots satisfying one of the following: 

{\rm (a)} $\alpha,\beta$ are simple roots of a simple factor of $G$ of type ${\rm A}_2$ or ${\rm C}_2$;

{\rm (b)} $\alpha$ is the simple root of a a simple factor of $G$ of type ${\rm A}_1$.
\end{theorem}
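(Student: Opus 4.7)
The plan is to reduce the theorem to the classification already established in Lemma \ref{Lemma LengthEstimates} via the codimension formula of Theorem \ref{Theo KirwanStratXusSlambda}. That formula gives ${\rm codim}_X X_{us}(\lambda) = -1 + \min\{l(w) : w\lambda(h_0) < 0\}$, so $\mc L_\lambda$ is $S$-movable precisely when no $w \in W$ with $l(w) \leq 2$ satisfies $w\lambda(h_0) < 0$. Since $\lambda \in \Lambda^{++}$ and all $\omega_\alpha(h_0)$ are positive by Proposition \ref{Prop ValuesOmegah0}, the identity $w=1$ contributes $\lambda(h_0)>0$ for free, so only the lengths $l(w)=1,2$ need to be analyzed.

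For the first (sufficient) assertion, if $G$ has no simple factor of type ${\rm A}_1$, ${\rm A}_2$, or ${\rm C}_2$, then Lemma \ref{Lemma LengthEstimates}(ii) forbids $s_\alpha \lambda(h_0)<0$ for every simple root $\alpha$ (the only obstruction being an ${\rm A}_1$ factor) and Lemma \ref{Lemma LengthEstimates}(iii) forbids $s_\beta s_\alpha \lambda(h_0)<0$ for every pair of distinct simple roots (the only obstructions being ${\rm A}_1$, ${\rm A}_2$, ${\rm C}_2$ factors). Hence no $w$ of length $\leq 2$ lands in $W^-(\lambda)$, which yields ${\rm codim}_X X_{us}(\lambda)\geq 2$.

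For the characterization of $S$-movability among $S$-ample bundles, I would first dispose of the length-$1$ case. If $\mc L_\lambda$ is $S$-ample, then by Corollary \ref{Coro KirwanImpliesEigencone} one has $s_\alpha\lambda(h_0)\geq 0$ for every $\alpha$ orthogonal to $\Pi\setminus\{\alpha\}$, which by Lemma \ref{Lemma LengthEstimates}(ii) is the only way a simple reflection could push $\lambda$ into the negative half-space of $h_0$. Thus the length-$1$ obstruction is automatic under $S$-ampleness, and $S$-movability is equivalent to $s_\beta s_\alpha\lambda(h_0)\geq 0$ for all pairs $\alpha\neq\beta$. By Lemma \ref{Lemma LengthEstimates}(iii), such a length-$2$ inequality can fail only when $(\alpha,\beta)$ is of one of the three types listed there, which match exactly cases (a) (both simple roots of an ${\rm A}_2$ or ${\rm C}_2$ factor) and (b) (at least one root coming from an ${\rm A}_1$ factor); the equivalence is then read off in both directions.

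No serious obstacle is expected. The theorem is essentially a direct corollary of Theorem \ref{Theo KirwanStratXusSlambda} and Lemma \ref{Lemma LengthEstimates}, and the only care required is the bookkeeping that identifies the three cases of Lemma \ref{Lemma LengthEstimates}(iii) with the two cases (a) and (b) in the statement of the present theorem.
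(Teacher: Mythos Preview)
Your proposal is correct and follows essentially the same approach as the paper, which simply declares the theorem a direct consequence of Theorem~\ref{Theo KirwanStratXusSlambda} and Lemma~\ref{Lemma LengthEstimates} after noting that ${\rm codim}_X X_{us}(\lambda)=1$ iff some $w$ of length $2$ lies in $W^-(\lambda)$. Your write-up is in fact more detailed than the paper's, in particular your use of Corollary~\ref{Coro KirwanImpliesEigencone} to dispose of the length-$1$ obstruction under $S$-ampleness makes explicit a step the paper leaves to the reader.
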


The exceptional cases of rank 2 are considered in some detail in Section \ref{Sect Except}.

\section{Quotients and their Picard groups}\label{Sect Quotients}

In this section we use the language of divisors rather than line bundles, as it is more suitable for the context. This is unproblematic since $X$ is smooth. Thus, $C^{S}(X)$ is interpreted as a cone of $\RR$-divisors on $X$ (recall from Section \ref{Sect Flags} that $C^S(X)$ is the $S$-ample cone on $X$, identified with the eigencone ${\rm Cone}({\mc E}_0)\subset\Lambda_\RR^+$).

{\it We assume from now on that all simple factors of $G$ have at least 5 positive roots.} This is equivalent to the hypothesis of Theorem \ref{Theo codumUS2}, whence we get
$$
{\rm codim}_X(X_{us}(\lambda)) \geq 2 \quad\textrm{for all}\quad \lambda\in\Lambda^{++} \;.
$$
Let ${\mc C}$ be a fixed chamber in $C^S(X)$, let $Y:=Y(\mc C):=X_{ss}(\mc C)//S$ be the corresponding quotient, and 
let $\pi: X_{ss}(\mc C) \rightarrow Y$ be the quotient morphism. 
Since the unstable locus of $\mc C$ is of codimension at least two, the results of \cite{Seppanen-GlobBranch} apply and we obtain the following.
\begin{theorem}\label{Theo YisMoriDream and EffY=C}
The quotient $Y$ is a Mori dream space. Moreover,   
\begin{enumerate}
\item[\rm(i)] there is an isomorphism of $\QQ$-Picard groups $${\rm Pic}(Y)_\QQ \cong {\rm Pic}(X)_\QQ \;;$$
\item[\rm(ii)] there is an isomorphism of cones 
\begin{align}
\overline{\rm Eff}(Y) \cong C^S(X) \;.
\label{E: effconeY}
\end{align}
\end{enumerate} 
Moreover, for any line bunde ${\mc L}$ on $X$, there exists $k \in \NN$ and a
line bundle $\ul{\mc L}$ on $Y$ such that 
\begin{align}
H^0(X,{\mc L}^{jk})^S \cong H^0(Y,\ul{\mc L}^j) \;, \quad j \in \NN \;. 
\label{E: invsections}
\end{align}
\end{theorem}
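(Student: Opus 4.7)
The plan is to invoke the general framework of \cite{Seppanen-GlobBranch}, whose main results yield exactly statements (i), (ii) and \eqref{E: invsections} from two hypotheses on the GIT data: that $\mathcal{C}$ is a chamber and that the unstable locus has codimension at least two in $X$. My first task is therefore to verify that both hypotheses hold in our setting. The first is true by the choice of $\mathcal{C}$ (Definition \ref{Def Chambers}), and the second follows from Theorem \ref{Theo codumUS2} together with the standing assumption that every simple factor of $G$ has at least five positive roots.

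Having reduced the proof to the hypotheses of \cite{Seppanen-GlobBranch}, I would next sketch how the statements are derived. For the Picard group isomorphism (i) and the Mori dream property, the essential ingredient is descent along $\pi$. Since $X_{ss}(\mathcal{C}) = X_s(\mathcal{C})$ consists entirely of $3$-dimensional orbits with finite abelian stabilizers (Proposition \ref{Prop S-Orbits X}), Kempf's descent lemma furnishes, for every $G$-equivariant line bundle $\mathcal{L}_\lambda$, a power $\mathcal{L}_\lambda^k$ whose restriction to $X_{ss}(\mathcal{C})$ descends to a line bundle $\underline{\mathcal L}$ on $Y$. Conversely, pulling back a line bundle on $Y$ and extending by Hartogs (valid because $X$ is smooth and $X \setminus X_{ss}(\mathcal{C})$ has codimension at least two) produces a line bundle on $X$. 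The two constructions become inverse after tensoring with $\QQ$. The Mori dream structure is inherited from $X = G/B$: finite generation of the Cox ring of $X$ is passed to that of $Y$ via the codimension-two extension of invariant sections.

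For the effective cone in (ii), I would trace effective divisor classes through the correspondence of (i). A class in $\overline{\mathrm{Eff}}(Y)$ pulls back to an effective class on $X_{ss}(\mathcal{C})$; extension to $X$ yields a class $\lambda \in \Lambda_\RR^+$ for which some positive multiple carries a nonzero $S$-invariant section, which is exactly the cone $C^S(X) = {\rm Cone}(\mathcal{E}_0)$ whose shape was pinned down in Corollary \ref{Coro KirwanImpliesEigencone}. The reverse direction is again descent: a nonzero $S$-invariant section of $\mathcal L_\lambda^k$ descends to a section of $\underline{\mathcal L}$ whose zero locus is an effective divisor. The final formula \eqref{E: invsections} is then direct bookkeeping: fixing $k$ so that $\mathcal L^k|_{X_{ss}(\mathcal{C})} = \pi^*\underline{\mathcal L}$, pulling back gives a bijection between global sections of $\underline{\mathcal L}^j$ on $Y$ and $S$-invariant sections of $\mathcal L^{jk}$ on $X_{ss}(\mathcal{C})$, which extend uniquely to $X$ by the codimension-two hypothesis.

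The main obstacle is not any of the individual steps but the Mori dream conclusion, which requires controlling the finite generation of the full Cox ring of $Y$ and not merely of individual section rings, as well as the simultaneous bounding of the descent integer $k$ across a full system of generators for $\mathrm{Pic}(X)_\QQ$. These finiteness points constitute the substantive content of the cited results of \cite{Seppanen-GlobBranch}; once granted, our role is reduced to the hypothesis-checking outlined above.
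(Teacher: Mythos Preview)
Your proposal is correct and matches the paper's approach exactly: the paper's entire argument is the single sentence preceding the theorem, ``Since the unstable locus of $\mc C$ is of codimension at least two, the results of \cite{Seppanen-GlobBranch} apply and we obtain the following.'' You have identified precisely the same reduction (chamber hypothesis plus codimension-two hypothesis via Theorem~\ref{Theo codumUS2}) and then gone further by sketching the descent/Hartogs mechanism that underlies the cited reference, which the paper does not spell out.
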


\begin{rem}
 The fact that the unstable locus $X_{us}(\mc C)$ is of codimension at least two yields the identities \eqref{E: effconeY} and \eqref{E: invsections} which allow us to, essentially, 
 identify the Cox ring of $Y$ with the subring of $S$-invariants of the Cox ring of $X$. However, even without the assumption that a chamber have a small unstable locus, the 
 corresponding GIT-quotient would be a Mori dream space (cf. \cite[Thm. 5.5]{maslovaric}).
\end{rem}

In fact, we can say more about the convex geometry 
of the divisors on $Y$. We first recall that the {\it stable base locus}, $\mbox{Bs}(D)$,
of a Cartier divisor $D$ on a variety $Z$ is the intersection of the base loci of 
all positive multiples of $D$; 
\begin{align*}
{\rm Bs}(D)=\bigcap_{m \geq 1} {\rm Base}(mD) \;.
\end{align*}
An effective divisor $D$ is said to be {\it movable} if 
$\mbox{codim}(\mbox{Bs}(D), Z) \geq 2$. In the case when $Z$ has a finitely 
generated Picard group, we define the movable cone $\mbox{Mov}(Z)$ to be
the closed convex cone in $\mbox{Pic}(Z)_\RR$ generated by all 
movable divisors.  

We now have the following result about cones of divisors on the quotient $Y$.

\begin{theorem}\label{Theo EffY=MovY NeffY=C}
Let $Y=X_{ss}(\mc C)//S$ be the quotient above. Then there is a equality of
cones
\begin{align*}
\overline{{\rm Eff}}(Y) &= {\rm Mov}(Y) \;,
\end{align*}
and an isomorphism of cones
\begin{align*}
{\rm Nef}(Y) &\cong \overline{\mc C} \;.
\end{align*}
Moreover, every nef $\QQ$-divisor $D$ on $Y$ is semiample, i.e., some 
positive multiple of $D$ is basepoint-free. 
\end{theorem}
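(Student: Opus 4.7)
The plan is to combine the Mori dream space structure of $Y$ from Theorem \ref{Theo YisMoriDream and EffY=C} with the variation of GIT theory of Thaddeus and Dolgachev--Hu, together with the explicit chamber description of $C^S(X)$ from Theorem \ref{Theo Chambers}. The semiampleness assertion is immediate: by the Hu--Keel definition of a Mori dream space the nef cone coincides with the semiample cone, so by Theorem \ref{Theo YisMoriDream and EffY=C} every nef $\QQ$-divisor on $Y$ is semiample.

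To identify ${\rm Nef}(Y)$ with $\ol{\mc C}$, I would first note that for any rational class $\lambda$ in the interior of $\mc C$, the GIT construction realizes $Y={\rm Proj}\bigoplus_k H^0(X,{\mc L}_\lambda^k)^S$, so the descent of a suitable power of ${\mc L}_\lambda$ produces an ample line bundle $\ul{\mc L}_\lambda$ on $Y$. Under the $\QQ$-Picard isomorphism of Theorem \ref{Theo YisMoriDream and EffY=C}, this places the interior of $\mc C$ inside the ample cone of $Y$, giving $\ol{\mc C}\subseteq {\rm Nef}(Y)$. For the reverse inclusion, VGIT identifies ${\rm Nef}(Y)$ with the closure of the unique Mori chamber containing a rational ample class used in constructing $Y$; by Theorem \ref{Theo Chambers} this Mori chamber is precisely $\mc C$, whence ${\rm Nef}(Y)\subseteq \ol{\mc C}$.

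For the equality $\ol{{\rm Eff}}(Y)={\rm Mov}(Y)$, I would invoke the general Mori dream space identity
$$
{\rm Mov}(Y) \;=\; \bigcup\limits_{Y'} {\rm Nef}(Y'),
$$
where the union ranges over all small $\QQ$-factorial modifications $Y'$ of $Y$, and identify the relevant SQMs with the quotients $Y(\mc C')$ attached to the other GIT chambers. The standing hypothesis on $G$ activates Theorem \ref{Theo codumUS2}, giving ${\rm codim}_X X_{us}(\lambda)\geq 2$ for every $\lambda\in\Lambda^{++}$; consequently each wall-crossing across an $\mc H_w$ is small rather than divisorial, and every GIT chamber $\mc C'$ produces a genuine SQM $Y(\mc C')$ of $Y$. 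Applying the previous paragraph to each chamber gives ${\rm Nef}(Y(\mc C'))\cong \ol{\mc C'}$, and since the chamber closures cover $\Lambda_\RR^+$ (Theorem \ref{Theo Chambers}),
$$
{\rm Mov}(Y) \;=\; \bigcup\limits_{\mc C'}\ol{\mc C'} \;=\; \Lambda_\RR^+ \;\cong\; \ol{{\rm Eff}}(Y),
$$
the last identification being Theorem \ref{Theo YisMoriDream and EffY=C} combined with Corollary \ref{Coro KirwanImpliesEigencone}.

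The hard part will be the technical verification that every VGIT wall-crossing produces a small modification of $Y$ rather than contracting a divisor, so that each GIT chamber genuinely contributes an SQM to the decomposition of ${\rm Mov}(Y)$. This is exactly what the codimension-$\geq 2$ estimate of Theorem \ref{Theo codumUS2} is designed to control: absent it, certain chambers would realize divisorial contractions and the movable cone would sit strictly inside the effective cone.
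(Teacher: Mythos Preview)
Your approach is sound but takes a genuinely different route from the paper. The paper proves both identities by a single direct base-locus computation: it establishes
\[
{\rm Bs}(D)=\pi\bigl(X_{us}(\pi^*D)\cap X_{ss}(\mc C)\bigr)
\]
for any divisor $D$ on $Y$, whence $\ol{\rm Eff}(Y)={\rm Mov}(Y)$ follows immediately from Theorem~\ref{Theo codumUS2} applied to $\pi^*D$ (together with equidimensionality of the fibres of $\pi$), and ${\rm Nef}(Y)\subseteq\ol{\mc C}$ follows by exhibiting, for any $D$ with $\pi^*D\notin\ol{\mc C}$, a Kirwan stratum $SBx_w\subset X_{ss}(\mc C)\cap X_{us}(\pi^*D)$ whose image lies in ${\rm Bs}(D)$. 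The reverse inclusion and the semiampleness statement are obtained simultaneously from the Dolgachev--Hu inclusion $X_{ss}(\mc C)\subseteq X_{ss}(A)$ for $A\in\ol{\mc C}$, which forces the displayed base locus to be empty. Your route instead lifts everything to the Hu--Keel structural level: you realise each GIT chamber $\mc C'$ as an SQM $Y(\mc C')$ with ${\rm Nef}(Y(\mc C'))\cong\ol{\mc C'}$ and assemble these via ${\rm Mov}(Y)=\bigcup_{Y'}{\rm Nef}(Y')$ to cover $\Lambda_\RR^+$. This buys you more --- you obtain the full Mori chamber decomposition of $\ol{\rm Eff}(Y)$ as the GIT fan --- at the cost of importing more VGIT machinery; in particular your step ``VGIT identifies ${\rm Nef}(Y)$ with the closure of the Mori chamber \dots\ by Theorem~\ref{Theo Chambers} this Mori chamber is precisely $\mc C$'' is where the argument is thinnest, since Theorem~\ref{Theo Chambers} speaks only of GIT chambers and the identification of GIT chambers with Mori chambers is part of what is being proved. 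The paper's base-locus argument sidesteps this by never needing to compare the two chamber structures.
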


\begin{proof}
In order to prove the first identity, let $D$ be an effective divisor in the 
interior of $\overline{\mbox{Eff}}(Y)$. 
The divisor $\pi^*D$ on $X_{ss}(\mc C)$ then extends uniquely to a divisor on $X$, 
which we also denote by $\pi^*D$. By the isomorphism \eqref{E: invsections} 
the stable base locus of $D$ is given by 
\begin{align}
\mbox{Bs}(D)=\pi(X_{us}(\pi^*D) \cap X_{ss}(\mc C)) \;. 
\label{E: stablebaseloc}
\end{align}
Since the unstable locus of any divisor in $C^S(X)$ is of codimension at least 
two, and since the fibres of $\pi$ all have the same dimension, 
\eqref{E: stablebaseloc} shows that $D$ is movable. Hence, 
$\mbox{Int}(\overline{\mbox{Eff}}(Y)) \subseteq \mbox{Mov}(Y)$. Since both 
cones are rational polyhedral, $Y$ being a Mori dream space, the inclusion 
$\overline{\mbox{Eff}}(Y) \subseteq \mbox{Mov}(Y)$ follows. This proves the 
first identity.

For the second identity, we first note that every ample divisor is identified 
with a divisor in $\overline{\mc C}$ by the isomorphism \eqref{E: effconeY}.
Indeed, if $D$ is a divisor on $Y$ such that $\pi^*D \nsubseteq \overline{\mc C}$, 
then, by Theorem \ref{Theo Chambers}, there exists a $w \in W$ such that 
$SBx_w \subseteq X_{ss}(\mc C) \cap X_{us}(\pi^*D)$, so that
$\pi(SBx_w)$ lies in the stable base locus 
$\mbox{Bs}(D)$. In particular, $D$ cannot be ample. 
Hence, $\mbox{Ample}(Y) \subseteq \overline{\mc C}$, so that 
we also have $\mbox{Nef}(Y) \subseteq \overline{\mc C}$. On the other hand, 
if $A \in \overline{\mc C}$ is any $\RR$-divisor on the boundary of $\mc C$, the inclusion of 
semistable loci
\begin{align*}
X_{ss}(\mc C) \subseteq X_{ss}(A)
\end{align*} 
holds \cite{Dolga-Hu}, so that $X_{us}(A) \subseteq X_{us}(\mc C)$. 
If $A$ is a $\QQ$-divisor, the identity \eqref{E: stablebaseloc} applied to 
$mA$, for $m \in\NN$ such that $mA$ is an effective integral divisor,   
then shows that $mA=\pi^*D$ for a semiample divisor $D$. This proves the 
second identity as well as the final claim.
\end{proof}

\section{The exceptional cases of types ${\rm A}_2$ and ${\rm C}_2$}\label{Sect Except}

The results of the last section were derived under the assumption that $G$ has no simple factors of type ${\rm A}_1,{\rm A}_2,{\rm C}_2$. Such simple factors create some complications, as seen in Theorem \ref{Theo codumUS2} and, on the structural level, in Lemma \ref{Lemma LengthEstimates}. Here we present some details on the cases ${\rm A}_2$ and ${\rm C}_2$. The particularity of these cases is in a sense due to the small dimension of the flag varieties; $\dim SL_3/B=3$ and $\dim Sp_4/B=4$. The GIT-quotients by the action of a principal 3-dimensional subgroup $S$ with respect to any ample line bundle are, respectively, a point and a rational curve. The Picard groups of these quotients are, respectively, trivial and infinite cyclic. The Picard groups of the flag varieties are lattices of rank 2, in both cases, so the isomorphism ${\rm Pic}(X)_\RR\cong{\rm Pic}(Y)_\RR$ does not hold. This is due to the fact that, in both cases, there are no movable chambers among the GIT-classes. Referring to Theorem B stated 
in the Introduction, we observe a failure in parts (iv) and (v). Part (i) and (iii) hold without alteration. Part (ii) holds with some modifications.

In the following two subsections we keep the general notation and relate it to specific models using projective geometry. Let us recall some generalities. We use the embedding of $X$ into the product of (fundamental) partial flag varieties of $G$ given by maximal parabolic subgroups containing $B$. We are dealing with classical groups of rank 2, so we have $X\subset X_1\times X_2$ defined by an incidence relation on the subspaces in the flags. To gain some understanding on the $S$-orbits, we use the fact that the principal subgroup has a unique closed orbit in each flag variety - a rational normal curve. The 2-dimensional orbits are then constructed in terms of the tangential varieties to these rational curves. Recall that the tangential variety of a smooth projective variety $Y\subset \PP(V)$ is defined as the union $\TT Y := \cup_{[v]\in Y} \PP(T_v\hat{Y}) \subset \PP(V)$; this union is a closed subvariety in $\PP(V)$. We also denote by $\TT_{[v]}Y=\PP(T_v\hat{Y})$ the linear projective subspace of $\PP(V)$
tangential to $Y$ at a given point $[v]$. The tangential variety if a rational normal curve in $C\subset\PP(V)$ is a surface preserved by the projective automorphism group of the curve ($SL_2$ or $PSL_2$). If $C$ is a line, then $\TT C=C$. If ${\rm deg}\, C\geq 2$, there are two orbits of the automorphism group in $\TT C$ - the curve itself and its complement $\TT C\setminus C$. The stabilizer of a point in $\TT C\setminus C$ is a torus if ${\rm deg}\, C>2$ and the normalizer of a torus if $C$ is a conic. The reader may refer to the book \cite{Landsberg-2012-book} for more information on tangential varieties. We now proceed with our examples.

\subsection{The case $G=SL_3$}

We have here $X=SL_3/B$ and $S=SO_3\subset SL_3$ with respect to some nondegenerate quadric $\kappa\in S^2(\CC^3)^*$. According to Proposition \ref{Prop S-Orbits X}, there are 4 orbits of $S$ in $X$. The orbits can be described geometrically, using the model for $X$ as the space of complete flags on $\CC^3$, or equivalently flags in $\PP^2$. The flag variety is an incidence variety in the product of Gra{\ss}mannians, which in this case are $X_1=\PP^2$ and its dual $X_2=(\PP^2)^*$. In the following we freely interpret the points in $(\PP^2)^*$ as projective lines in $\PP^2$. Thus
$$
X = \{(p,L)\in\PP^2\times(\PP^2)^*: p \in L\} \subset \PP^2 \times (\PP^2)^* \;.
$$
The principal subgroup $S$ has exactly one closed orbit in each of the projective planes, which we denote by $C_1\subset\PP^2$ and $C_2\subset(\PP^2)^*$. These are two planar conics. The isomorphism $\CC^3\to(\CC^3)^*$ induced by $\kappa$ sends $C_1$ to $C_2$ via $p\mapsto \TT_pC_1$. Thus $C_2$ consists of the lines tangent to $C_1$. Notice that $S$ acts transitively on the complement of either conic, $\PP^2\setminus C_j$. We have 
\begin{align*}
\textrm{1-dimensional:}\quad & O^1 = C = Sx_1 = \{(p,L)\in X: p\in C_1 , L=\TT_pC_1 \} \cong S/B_S\;;\\
\textrm{2-dimensional:}\quad & O_1^2 = Sx_{s_1} = \{(p,L)\in X: p\notin C_1, L\;\textrm{tangent to}\; C_1 \} \cong S/H_S\;;\\
 & O_2^2=Sx_{s_2}= \{(p,L)\in X: p\in C_1 , L\;\textrm{secant to}\; C_1 \} \cong S/H_S \;;\\
\textrm{3-dimensional:}\quad & O^3=\{ (p,L)\in X : p\notin C_1 , L\;\textrm{secant to}\; C_1 \} \;,
\end{align*}
where $s_1,s_2$ are the simple reflections generating the Weyl group of $G$.

The unstable loci of effective line bundles on $X$ are the following:
\begin{align*}
& X_{us}(\lambda) = C\cup Sx_{s_2}\cup Sx_{s_1} \;,\; \lambda\in\Lambda^{++} \;;\\
& X_{us}(k\omega_1) = C\cup Sx_{s_2} \;,\; k\geq 1 \;;\\
& X_{us}(k\omega_2) = C\cup Sx_{s_1} \;,\; k\geq 1 \;.
\end{align*}

Thus there is a single GIT-class of ample line bundles on $X$, $\mc C=\Lambda_\RR^{++}$, and the corresponding quotient is a point, $Y=X_{ss}(\mc C)//S=\{{\rm pt}\}$. Thus ${\rm Pic}(Y)=0$. This corresponds to the fact that $S$ us spherical in $G$ and thus $H^0(X,\mc L_\lambda)$ is a multiplicity free $S$-module for every $\lambda\in\Lambda^+$. In particular, $\dim H^0(X,\mc L_\lambda)\leq 1$.

\subsection{The case $G=Sp_4$}

Let $\Omega$ be a nondegenerate skew-symmetric 2-form on $\CC^4$ and $G=Sp_4$ be the corresponding symplectic group. The partial flag varieties of $Sp_4$ are $X_1=\PP^3$ and the Lagrangian Gra{\ss}mannian $Q={\rm Gr}_\Omega(2,\CC^4)$. Since $\mk{sp}_4\cong\mk{so}_5$, the Pl\"ucker embedding presents the Lagrangian Gra{\ss}mannian as a nondegenerate quadric $Q\subset\PP^4$. In the following we often interpret the points of $Q$ as projective lines in $\PP^3$ without supplementary notation. We consider the flag variety $X=G/B$ as an incidence variety in the product $\PP^3\times Q$ given by
$$
X=Sp_4/B = \{(p,L)\in\PP^3\times Q : p\in L \} \;,\; \dim X=4\;.
$$

We begin with a description of the orbits of the principal subgroup $SL_2\cong S\subset Sp_4$. Note that $S$ is mapped to a principal subgroup in $SL_4$ and $SL_5$, under the two fundamental representations of $Sp_4$, respectively, i.e., the representations remain irreducible for $S$. Thus $S$ has unique closed orbits $C_1\subset\PP^3$ and $C_2\subset\PP^4$, with $C_2\subset Q$; these are rational normal curves of degrees 3 and 4, respectively. Note that the tangent line $\TT_pC_1$ is Lagrangian for any $p\in C_1$. As in the case of $SL_3$, we have $C_2=\{L\subset\PP^3: L\;\textrm{tangent to}\; C_1\}$. Furthermore, $S$ has exactly three orbits in $\PP^3$ and the same number in $Q$. Note that the tangential variety of $C_2\subset\PP^4$ is contained in $Q$. The complements $\TT C_j\setminus C_j$ are the 2-dimensional $S$-orbits in $\PP^3$ and $Q$, respectively. Also note that $L\in\TT C_2$ implies $L\cap C_1\ne\emptyset$, where $L$ is interpreted once as a point in $Q$ and once as a line in $\PP^3$. For $L\in Q$,
we denote by $S_L$ the stabilizer of $L$ in $S$; note that, when $L$ is viewed as a line in $\PP^3$, $S_L$ may act nontrivially on $L$. In particular, for $L\in \TT C_2\setminus C_2$, the stabilizer $S_L$ is a Cartan subgroup of $S$ and has three orbits in $L$: two fixed points and a copy of $\CC^\times$. The principal curve $C\subset X$ and the 2-dimensional $S$-orbits in $X$ are given by:
\begin{align*}
\textrm{1-dimensional:}\quad & O^1 = C = Sx_1=Sx_{s_2s_1s_2s_1}\{(p,L)\in X : p\in C_1 , L=\TT_pC_1 \} \;;\\
\textrm{2-dimensional:}\quad & O_1^2 = Sx_{s_1} = \{ (p,L)\in X : p\notin C_1 , L\in C_2 \} \;;\\
 & O_2^2 = Sx_{s_2} = \{ (p,L)\in X : p\in C_1 , L\in \TT C_2\setminus C_2 \} \;;\\
 & O_3^2 = Sx_{s_2s_1} = \{ (p,L)\in X : p\in \TT C_1\setminus C_1 , L\in \TT C_2\setminus C_2 , p\in L^{S_L} \} \;.
\end{align*}
The rest of the $S$-orbits are 3-dimensional.

Let us now consider ample line bundle on $X$ and their unstable loci. The restriction of weights from $H$ to $H_S$ is given by
$$
\iota^*:\Lambda\to\ZZ\cong \Lambda_S \;,\; \iota^*(\omega_1)=3 \;,\; \iota^*(\omega_2)=4 \;,\; \iota^*(a_1\omega_1+a_2\omega_2)=3a_1+4a_2 \;.
$$
To prove this formulae one may use Proposition \ref{Prop ValuesOmegah0}, or compute here directly: by the principal property $\iota^*(\alpha_j)=2$ for both simple roots $\alpha_1,\alpha_2$ (with $|\alpha_1|<|\alpha_2|$); we have $\omega_1=\frac12(2\alpha_1+\alpha_2)$ and $\omega_2=\alpha_1+\alpha_2$, whence the formulae.

We denote $\chi_0=3\alpha_1+2\alpha_2$; this is the integral generator of the ray in $\Lambda_\RR^+$ corresponding to $\RR^+ h_0$ via the Killing form.

\begin{prop}
\begin{enumerate}
\item[\rm (i)] For $\lambda=m\chi_0$ with $m\in\NN$, the line bundle ${\mc L}_{\lambda}$ has $S$-unstable locus of dimension 2 (hence codimension 2 in $X$), which is given by $X_{us}(\lambda)=C\cup O_1^2 \cup O_2^2$.
\item[\rm (ii)] For $\lambda\in \Lambda^{+}\setminus\NN\chi_0$, let $\omega_j$ be the fundamental weight belonging to the same connected component of $\Lambda_\RR^+\setminus\RR^+\chi_0$ as $\lambda$. Then the line bundle ${\mc L}_{\lambda}$ has $D_j=\ol{SBx_{s_js_i}}$ as an $S$-unstable divisor.
\end{enumerate}
\end{prop}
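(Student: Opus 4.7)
The plan is a direct application of Theorem \ref{Theo KirwanStratXusSlambda}: once I tabulate the sign of $w\lambda(h_0)$ across the 8-element Weyl group of type ${\rm C}_2$, both parts follow by reading off the resulting Kirwan strata. The key tabulation, obtained by iterating the identity $s_\alpha\nu(h_0)=\nu(h_0)-2n_{\nu,\alpha}$ from the proof of Lemma~\ref{Lemma LengthEstimates} with the ${\rm C}_2$ Cartan integers $n_{\alpha_1,\alpha_2}=-1$, $n_{\alpha_2,\alpha_1}=-2$ (in the convention $|\alpha_1|<|\alpha_2|$), is for $\lambda=a\omega_1+b\omega_2$ given by
$$
\begin{array}{c|cccccccc}
w & 1 & s_1 & s_2 & s_1 s_2 & s_2 s_1 & s_1 s_2 s_1 & s_2 s_1 s_2 & w_0 \\ \hline
w\lambda(h_0) & 3a+4b & a+4b & 3a+2b & a-2b & -a+2b & -3a-2b & -a-4b & -3a-4b
\end{array}
$$
Two features of the table will be used. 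Inverting the Cartan matrix gives $\chi_0=3\alpha_1+2\alpha_2=2\omega_1+\omega_2$, so $\RR^+\chi_0$ is precisely the ray $\{a=2b\}\cap\Lambda_\RR^+$, on which the two middle entries vanish simultaneously. Secondly, every $w$ of length $\geq 3$ has the form $w_0 u$ with $l(u)\leq 1$, and since $w_0=-1$ in ${\rm C}_2$ this yields $w\lambda(h_0)=-u\lambda(h_0)<0$ for $\lambda\in\Lambda^{++}$, so such $w$ never contribute to $W^+(\lambda)$.

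For part (i), substituting $a=2m$, $b=m$ in the table leaves $W^+(m\chi_0)=\{1,s_1,s_2\}$, and Theorem~\ref{Theo KirwanStratXusSlambda} gives
$$
X_{us}(m\chi_0)\;=\;SBx_1\cup SBx_{s_1}\cup SBx_{s_2}\;,
$$
with strata of dimensions $1,2,2$. Clearly $SBx_1=Sx_1=C$. For $j=1,2$, the stratum $SBx_{s_j}$ is irreducible (image of $S\times Bx_{s_j}$), of dimension $l(s_j)+1=2$, and it contains the 2-dimensional orbit $Sx_{s_j}=O_j^2$; hence $SBx_{s_j}=O_j^2$. Thus $X_{us}(m\chi_0)=C\cup O_1^2\cup O_2^2$, of codimension $2$ in $X$.

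For part (ii), fix $\lambda=a\omega_1+b\omega_2\in\Lambda^+$ with $a\neq 2b$. Exactly one length-$2$ element then lies in $W^+(\lambda)$: it is $s_1 s_2$ when $a>2b$ and $s_2 s_1$ when $a<2b$, corresponding respectively to $\lambda$ on the $\omega_1$- and $\omega_2$-sides of the wall $\RR^+\chi_0$, i.e.\ to $(j,i)=(1,2)$ and $(j,i)=(2,1)$. In either case the corresponding stratum $SBx_{s_j s_i}$ is irreducible of dimension $l(s_j s_i)+1=3$ in the 4-fold $X$, so its closure $D_j=\ol{SBx_{s_j s_i}}$ is an unstable divisor. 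The whole argument is bookkeeping once the table is in place; the only non-computational point is the identification $SBx_{s_j}=O_j^2$ in part~(i), and this should present no obstacle, because the dimensions match exactly and $SBx_{s_j}$ is the image of an irreducible variety containing $O_j^2$.
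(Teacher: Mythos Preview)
Your proof is correct and follows essentially the same approach as the paper: both apply Theorem~\ref{Theo KirwanStratXusSlambda} directly, use the identification $SBx_{s_j}=Sx_{s_j}=O_j^2$, and read off the dimensions $\dim SBx_{s_j}=2$ and $\dim SBx_{s_js_i}=3$. The paper's proof is simply a three-line sketch of what you have written out in full; your explicit weight table and the observation that length $\geq 3$ elements are of the form $w_0u$ with $l(u)\leq 1$ make the bookkeeping transparent.
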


\begin{proof}
The proposition follows directly from the description of the Kirwan stratification given in Theorem \ref{Theo KirwanStratXusSlambda} applied to the case in hand, together with the remark that $SBx_{s_j}=Sx_{s_j}\cong S/H_S$, and the dimension formulae $\dim SBx_{s_i}=2=\dim X-2$, $\dim SBx_{s_js_i}=3=\dim X-1$.
\end{proof}

\begin{prop}
There are 3 GIT-classes of ample bundles on $X$, besides the class of the trivial line bundle, given by
$$
\mc C^1 = \NN\chi_0 \;,\; \mc C_1^2 = \Lambda\cap {\rm Relint}({\rm Span}_{\RR^+}\{\omega_1,\chi_0\}) \;,\; C_2^2 = \Lambda\cap {\rm Relint}({\rm Span}_{\RR^+}\{\omega_2,\chi_0\})
$$
Furthermore, the unstable loci are given by
$$
X_{us}(\mc C_1^2) = D_1 \;,\; X_{us}(\mc C_2^2)=D_2 \;,\; X_{us}(\mc C^1) = D_1\cap D_2= C\cup O_1^2\cup O_2^2 \;.
$$
The GIT-classes ${\mc C}_j^2$ are chambers; the GIT-class $\mc C^1$ is $S$-movable.

For $\lambda\in\Lambda^{++}$, the quotient $X_{ss}(\lambda)//S$ is isomorphic to $\PP^1$.
\end{prop}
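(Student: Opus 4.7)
The plan is to reduce everything to Theorems \ref{Theo KirwanStratXusSlambda} and \ref{Theo Chambers} by a direct computation on the Weyl group of type $C_2$. First I would tabulate $\iota^*(w\lambda)=w\lambda(h_0)$ for each of the eight $w\in W$ and $\lambda=a_1\omega_1+a_2\omega_2\in\Lambda^{++}$. Using $\iota^*(\omega_1)=3$, $\iota^*(\omega_2)=4$, $\alpha_1=2\omega_1-\omega_2$ and $\alpha_2=-2\omega_1+2\omega_2$, one obtains
$$\iota^*(\lambda)=3a_1+4a_2,\quad \iota^*(s_1\lambda)=a_1+4a_2,\quad \iota^*(s_2\lambda)=3a_1+2a_2,$$
$$\iota^*(s_1s_2\lambda)=a_1-2a_2,\quad \iota^*(s_2s_1\lambda)=-(a_1-2a_2),$$
and the three remaining values are the negatives of the first three by Lemma \ref{Lemma sigma=w0}. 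Within $\Lambda_\RR^{++}$ the first three entries are strictly positive and their $w_0$-translates strictly negative, so the only wall is $\{a_1=2a_2\}$, which contains $\chi_0=3\alpha_1+2\alpha_2=2\omega_1+\omega_2$. Theorem \ref{Theo Chambers} then yields exactly the three GIT-classes $\mc C^1,\mc C_1^2,\mc C_2^2$ of the statement.

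Next I would extract the unstable loci from Theorem \ref{Theo KirwanStratXusSlambda}. For $\lambda\in\mc C^1$, $W^+(\lambda)=\{1,s_1,s_2\}$ yields strata $C=Sx_1$, $O_1^2=Sx_{s_1}$, $O_2^2=Sx_{s_2}$ of dimensions $1,2,2$. For $\lambda\in\mc C_1^2$, the element $s_1s_2$ also lies in $W^+(\lambda)$ and adds the $3$-dimensional stratum $SBx_{s_1s_2}$; the Bruhat relations $1,s_1,s_2\prec s_1s_2$ together with $SBx_w=Sx_w$ for $l(w)\le 1$ give $D_1=\ol{SBx_{s_1s_2}}=SBx_{s_1s_2}\cup C\cup O_1^2\cup O_2^2$, and $\mc C_2^2$ is symmetric. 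To prove $D_1\cap D_2=C\cup O_1^2\cup O_2^2$ I would rule out any $3$-dimensional orbit in the intersection: such an orbit $O$ would be $S$-unstable for every $\lambda\in\mc C_1^2\cup\mc C_2^2$, so by the Hilbert-Mumford criterion (Lemma \ref{Lemma Hilb-Mumf for S}), for any $x\in O$ with $H_S$-limit Weyl elements $\{w_1,w_2\}$ the values $w_i\lambda(h_0)$ must share a common nonzero sign throughout both chambers. Intersecting these sign constraints against the table reduces $\{w_1,w_2\}$ to a subset of $\{1,s_1,s_2\}$ (common positive sign) or of $\{s_1s_2s_1,s_2s_1s_2,w_0\}$ (common negative sign); in either case both $w_i\chi_0(h_0)$ are nonzero with a common sign, forcing $O\subset X_{us}(\chi_0)$ and contradicting that $X_{us}(\chi_0)$ contains no $3$-dimensional orbit. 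Lemma \ref{Lemma DifferentChambers}(iv) then identifies $\mc C_j^2$ as chambers (since $W^0(\mc C_j^2)=\emptyset$), and $\dim X_{us}(\chi_0)=2=\dim X-2$ identifies $\mc C^1$ as $S$-movable.

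Finally, for the quotient $Y=X_{ss}(\lambda)//S$ with $\lambda\in\Lambda^{++}$: generic $S$-orbits on $X$ are $3$-dimensional by Proposition \ref{Prop S-Orbits X}(i), so $\dim Y=1$. The variety $Y$ is irreducible (since $X_{ss}(\lambda)$ is), projective, and normal as a GIT quotient of a smooth variety by a reductive group. Rationality of $X$ produces, by composing a birational parametrization $\PP^4\dashrightarrow X$ with the quotient rational map $X\dashrightarrow Y$ and restricting to a generic line, a dominant rational map $\PP^1\dashrightarrow Y$; hence $Y$ is unirational, so by L\"uroth's theorem $\CC(Y)$ is purely transcendental, and being normal, projective and of dimension one, $Y\cong\PP^1$. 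I expect the delicate step to be the disjointness of $SBx_{s_1s_2}$ and $SBx_{s_2s_1}$, which ties together the wall-crossing picture and is what the Hilbert-Mumford argument above is designed to handle.
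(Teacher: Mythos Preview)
Your computation of the wall structure and the identification of the three GIT-classes via Theorem \ref{Theo Chambers} is correct and matches the paper's approach, as does your derivation of $X_{us}(\mc C_j^2)=D_j$ from Theorem \ref{Theo KirwanStratXusSlambda} together with Bruhat closure relations.

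There is, however, a genuine gap in your argument for $D_1\cap D_2=C\cup O_1^2\cup O_2^2$: you only rule out $3$-dimensional orbits from the intersection, but there is a fourth $2$-dimensional $S$-orbit in $X$, namely $O_3^2=Sx_{s_2s_1}$, which you do not address. In fact $O_3^2$ \emph{does} lie in $D_1\cap D_2$. Since $w_0\,s_2s_1=s_1s_2$ in $W(C_2)$, we have $O_3^2=Sx_{s_2s_1}=Sx_{s_1s_2}$; now $x_{s_1s_2}\in Bx_{s_1s_2}\subset D_1$ and $x_{s_2s_1}\in Bx_{s_2s_1}\subset D_2$, so $O_3^2\subset D_1\cap D_2$. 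On the other hand $s_1s_2\chi_0(h_0)=0$, so $x_{s_1s_2}$ is semistable for $\chi_0$ and $O_3^2\not\subset X_{us}(\mc C^1)$. Thus the displayed equality $X_{us}(\mc C^1)=D_1\cap D_2$ is actually false; the correct relation is $D_1\cap D_2=X_{us}(\mc C^1)\cup O_3^2$. (The paper's own proof simply cites the preceding proposition and does not detect this either.) The remaining claims --- that $X_{us}(\mc C^1)=C\cup O_1^2\cup O_2^2$, that the $\mc C_j^2$ are chambers, and that $\mc C^1$ is $S$-movable --- are unaffected and your arguments for them are fine. A minor additional omission in the Hilbert--Mumford paragraph: you should also dispose of the case where both $w_i\lambda(h_0)$ are positive on one chamber and negative on the other; this forces $w_1,w_2\in W^+(\mc C_1^2)\cap W^-(\mc C_2^2)=\{s_1s_2\}$ (or the $w_0$-translate), contradicting $w_1\ne w_2$.

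For $Y\cong\PP^1$ your L\"uroth argument is correct and genuinely different from the paper's. The paper argues instead that for the chambers $\mc C_j^2$ the quotient is geometric, hence a Mori dream space by \cite{maslovaric}, and $\PP^1$ is the only one-dimensional Mori dream space; for the wall class $\mc C^1$ it uses $S$-movability to conclude that ${\rm Pic}(Y)$ is discrete, which again forces $Y\cong\PP^1$. Your approach is more elementary and self-contained for curves; the paper's has the advantage of pointing toward the structural results used elsewhere in the article.
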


\begin{proof}
The statements on the unstable loci follow from the above proposition. We may now observe that all semistable orbits for $\mc C_j^2$ are 3-dimensional, so that $\mc C_j^2$ is a chamber. Also, ${\rm codim}_X(X_{us}(\mc C^1))=2$, hence the class $\mc C^1$ is $S$-movable. All three GIT-quotients are one dimensional. The quotients $X_{us}(\mc C_j^2)//S$, $j=1,2$ are geometric and hence, by a result of Maslovari\'c \cite{maslovaric}, are Mori dream spaces. Since $\PP^1$ is the only 1-dimensional Mori dream space, we have $X_{ss}(\mc C_j^2)//S\cong \PP^1$. Since $\mc C^1$ is $S$-movable, the quotient $X_{ss}(\mc C^1)$ has a discrete Picard group, and hence it is also isomorphic to $\PP^1$.
\end{proof}

The divisor $D_j$ is a $\PP^1$-bundle over the tangential variety $\TT C_j$. Indeed, we may define instability in $X$ with respect to the effective (but not ample) line bundle $\mc L_{\omega_j}$, by vanishing of the invariant ring $R(\omega_j)^S$. The image of the map $X\to \PP(V_{\omega_j})$ given by the sections of $\mc L_{\omega_j}$ is the partial flag variety $X_j$. This map factors through the projection $\pi_j:X\subset X_1\times X_2 \rightarrow X_j$, which is a $\PP^1$-bundle. Both invariant rings are polynomial in one variable, with $R(\omega_j)^S=\CC[f_j]$, ${\rm deg}(f_1)=4$ and ${\rm deg}(f_2)=3$. The group $S$ has three orbits in $X_j$, one in each dimension 1,2,3, with $\TT C_j$ being the 2-dimensional orbit-closure. Thus $(X_j)_{us}(\omega_j)=\TT C_j$ and hence $X_{us}(\omega_j)=\pi_j^{-1}(\TT C_j)$. The stabilizer of any point $y\in\TT C_j\setminus C_j$ is a torus $\CC^\times\subset S$, and this torus acts on the fibre $\pi_j^{-1}(y)\cong \PP^1$ with three orbits - two fixed points and their 
complement. We can conclude that $S$ has a unique open orbit in $X_{us}(\omega_j)$ with finite isotropy. Take $y_j=\pi_j(x_{s_j})$ (with $i\ne j$) so that $S_y=H_S$. Then $(\pi_j^{-1}(y_j))^{H_S}=\{x_{s_j},x_{s_js_i}\}$ and for $x\in(\pi_j^{-1}(y_j))^{H_S}\setminus\{x_{s_j},x_{s_js_i}\}$ we have $\ol{Sx}=X_{us}(\omega_j)$. Now note that $\pi_j^{-1}(y_j)\setminus \{x_{s_j}\}\subset Bx_{s_js_i}$, whence $Sx\subset SBx_{s_js_i}$. Since $\ol{SB_{s_js_i}}$ is irreducible, we have 
$$
X_{us}(\omega_j)=\ol{Sx}=\ol{SBx_{s_js_i}}=D_j = X_{us}(\mc C_j^2) \;.
$$

\vspace{0.3cm}

\noindent{\bf Acknowledgement:} We would like to thank Peter Heinzner for useful discussions and support.

{\small

}

\vspace{0.4cm}

\noindent{\it Address:}\\ Mathematisches Institut, Georg-August-Universit\"at G\"ottingen,\\ Bunsenstra\ss e 3-5, D-37073 G\"ottingen, Deutschland.\\

\noindent{\it Emails:}\\ {\verb'Henrik.Seppaenen@mathematik.uni-goettingen.de'}\\ {\verb'Valdemar.Tsanov@mathematik.uni-goettingen.de'}

\end{document}